\setlist[enumerate,1]{label=(\arabic*), ref=(\arabic*)}
\setlist[enumerate,3]{label=(\roman*), ref=(\roman*)}
\theoremstyle{plain}
\newtheorem{theorem}{Theorem}[section]
\newtheorem{lemma}[theorem]{Lemma}
\newtheorem{conjecture}[theorem]{Conjecture}
\newtheorem{question}[theorem]{Question}
\newtheorem{claim}[theorem]{Claim}
\newtheorem*{claim*}{Claim}
\newenvironment{claimproof}[1][Proof]{\par
	\pushQED{\qed}%
	
	\normalfont \topsep6\p@\@plus6\p@\relax
	\trivlist
	\item[\hskip\labelsep
	\textit{#1}\@addpunct{.}~]\ignorespaces
}{%
	\popQED\endtrivlist\@endpefalse
}
\theoremstyle{definition}
\newtheorem*{definition*}{Definition}
\newcommand{\calF}{\mathcal{F}}
\newcommand{\calT}{\mathcal{T}}
\newcommand{\ve}{\varepsilon}
\newcommand{\defeq}{\coloneqq}
\title{Anticoncentration of random spanning trees in almost regular graphs}
\author{
Hyunwoo Lee%
        \thanks{Department of Mathematical Sciences, KAIST, South Korea and Extremal Combinatorics and Probability Group (ECOPRO), Institute for Basic Science (IBS).
        E-mail: {\ttfamily hyunwoo.lee@kaist.ac.kr.} Supported by the National Research Foundation of Korea (NRF) grant funded by the Korea government(MSIT) No. RS-2023-00210430, and the Institute for Basic Science (IBS-R029-C4).}
}
\begin{document}
\maketitle

\begin{abstract}
    The celebrated formula of Otter \emph{[Ann. of Math. (2) 49 (1948), 583--599]} asserts that the complete graph contains exponentially many non-isomorphic spanning trees.
    In this paper, we show that every connected almost regular graph with sufficiently large degree already contains exponentially many non-isomorphic spanning trees.
    Indeed, we prove a stronger statement: for every fixed $n$-vertex tree $T$,
    $$
        \Pr\bigl[\mathcal{T} \simeq_{\mathrm{iso}} T\bigr] = e^{-\Omega(n)},
    $$
    where $\mathcal{T}$ is a uniformly random spanning tree of a connected $n$-vertex almost regular graph with sufficiently large degree.
    To prove this, we introduce a graph-theoretic variant of the classical balls--into--bins model, which may be of independent interest.
\end{abstract}


\section{Introduction}\label{sec:intro}

Counting the number of spanning trees of a given graph is a fundamental problem in combinatorics, with deep connections to probability theory and statistical physics, and has been extensively studied over the past century.
For a graph $G$, let $T(G)$ denote the set of its spanning trees.
Two classical landmark results in graph theory concerning the quantity $|T(G)|$ are Cayley’s formula~\cite{Cayley} and Kirchhoff's Matrix--Tree Theorem~\cite{Kirchhoff}.
Cayley’s formula determines the exact number of spanning trees of the complete graph on $n$ vertices, asserting that $|T(K_n)| = n^{n-2}$.
Beyond complete graphs, the Matrix--Tree Theorem provides a general method to compute $|T(G)|$ exactly via the eigenvalues of the Laplacian matrix of $G$.

There has been extensive research aimed at extending Cayley’s formula and the Matrix--Tree Theorem to derive bounds on $|T(G)|$ for various graphs $G$, such as being bipartite graphs~\cite{Fiedler-Sedlavek}, graphs with prescribed degree sequences~\cite{Kostochka}, $m$-edge graphs~\cite{Grimmett,Das}, and regular graphs~\cite{McKay,Alon}.
In particular, the quantity $|T(G)|$ has attracted significant attention in the case where $G$ is a connected regular graph.
Motivated by Cayley’s formula, McKay~\cite{McKay} showed that
$$
    \limsup_{n\to \infty} \{|T(G)|^{1/n}: \text{$G$ is a connected $n$-vertex $d$-regular graph}\} = \frac{(d-1)^{d-1}}{(d^2-2d)^{d/2 - 1}}, 
$$ where $d \geq 3$.
On the other hand, concerning lower bounds, Alon~\cite{Alon} proved that
$$
    |T(G)| \geq d^{(1 - o_d(1))n}
$$
for every connected $n$-vertex $d$-regular graph $G$.
Consequently, these results yield fairly precise asymptotic estimates for the number of spanning trees in connected regular graphs, summarized as follows.

\begin{theorem}[McKay~\cite{McKay}, Alon~\cite{Alon}]\label{thm:labeled-tree}
    Let $G$ be a connected $n$-vertex $d$-regular graph.
    Then
    $$
        |T(G)| = d^{(1 - o_d(1))n}.
    $$
\end{theorem}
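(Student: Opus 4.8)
The plan is to prove the two bounds separately and then merge the error terms into a single $o_d(1)$. The lower bound $|T(G)|\ge d^{(1-o_d(1))n}$ is precisely Alon's inequality quoted above, so nothing more is needed in that direction; all of the remaining work is the (much easier) upper bound, together with the bookkeeping required to make one function $o_d(1)$ work uniformly over $n$ and over all connected $n$-vertex $d$-regular graphs.

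For the upper bound I would use the Matrix--Tree Theorem in spectral form, $|T(G)|=\tfrac1n\prod_{i=2}^{n}\lambda_i$, where $0=\lambda_1<\lambda_2\le\cdots\le\lambda_n$ are the Laplacian eigenvalues of $G$ and $\sum_{i=2}^{n}\lambda_i=\operatorname{tr}(L)=dn$. By the AM--GM inequality $\prod_{i=2}^{n}\lambda_i\le\bigl(\tfrac{dn}{n-1}\bigr)^{n-1}$, so $|T(G)|\le\tfrac1n\bigl(\tfrac{dn}{n-1}\bigr)^{n-1}<d^{\,n}$, using $(1+\tfrac1{n-1})^{n-1}<e\le d$ for $d\ge 3$. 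This already yields $|T(G)|\le d^{(1+o_d(1))n}$, and combined with Alon's lower bound it gives the theorem. If one insists on the asymptotically sharp constant on the upper side, McKay's formula provides it: for each fixed $d$, each $\eps>0$, and all sufficiently large $n$ one has $|T(G)|^{1/n}<M_d+\eps$ with $M_d=\tfrac{(d-1)^{d-1}}{(d^2-2d)^{d/2-1}}$, and the exact identity $\ln M_d=\ln d+(d-1)\ln(1-\tfrac1d)-(\tfrac d2-1)\ln(1-\tfrac2d)$ expands to $\ln M_d=\ln d-\tfrac1{2d}+O(d^{-2})$, so $M_d=d^{\,1-\Theta(1/(d\log d))}=d^{1-o_d(1)}$; this refinement, however, is not needed for the stated form of the result.

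It remains to take the pointwise maximum of Alon's error function and the one from the upper bound to obtain a single $\eps(d)\to 0$ with $d^{(1-\eps(d))n}\le|T(G)|\le d^{(1+\eps(d))n}$ for every connected $n$-vertex $d$-regular graph, which is the claim. The only mildly delicate point---present only if one routes the upper bound through McKay---is the passage from a $\limsup$-in-$n$ statement (with $d$ fixed) to a bound valid for every $n$, handled by splitting off the finitely many small $n$ and using $|T(G)|<d^{\,n}$ there; with the crude AM--GM bound this issue vanishes entirely. For completeness one could instead reprove Alon's inequality directly, starting from $|T(G)|=\tfrac1n\prod_{i=2}^{n}(d-\mu_i)$ and the adjacency trace identities $\sum_i\mu_i=0$, $\sum_i\mu_i^{2}=dn$ (which force all but $O(n/d)$ of the $\mu_i$ below $d/2$); the real obstacle there is controlling the few adjacency eigenvalues lying very close to $d$---equivalently, the smallest nonzero Laplacian eigenvalues---which is exactly the point where Alon's argument is delicate.
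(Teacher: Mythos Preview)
The paper does not prove this theorem at all: Theorem~\ref{thm:labeled-tree} is quoted as a known result of McKay and Alon and is used only as background motivation (the actual quantitative input later in the paper is Kostochka's Theorem~\ref{thm:Kostochka}, also quoted without proof). So there is no ``paper's own proof'' to compare against.

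That said, your sketch is correct and is the standard way one would justify the statement. The upper bound via the Matrix--Tree Theorem and AM--GM is clean and valid; your computation $|T(G)|\le\tfrac1n\bigl(\tfrac{dn}{n-1}\bigr)^{n-1}<d^{\,n}$ for $d\ge 3$ is right, and this already suffices for the $d^{(1+o_d(1))n}$ upper bound. The lower bound is, as you say, precisely Alon's theorem, and you correctly identify that the only nontrivial content lies there (specifically in controlling the near-$d$ adjacency eigenvalues, i.e.\ the small Laplacian eigenvalues). Your remarks on McKay's sharper constant and on merging the two $o_d(1)$ terms are accurate but, as you note, unnecessary for the form stated. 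There is nothing to fix.
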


By extending Alon's result, Kostochka~\cite{Kostochka} established the following asymptotically tight inequality that can be applied to many graphs,

\begin{theorem}[Kostochka~\cite{Kostochka}]\label{thm:Kostochka}
    Let $G$ be a connected $n$-vertex graph with minimum degree $k$. Let $d_g(G)$ be the geometric average degree of $G$, say $d_g(G):= \left( \prod_{v\in V(G)} d_G(v)\right)^{1/n}$. Then the following inequality holds.
    $$
        d_g(G)^{n}e^{-O(\log^2 k/k)n} \leq |T(G)| \leq d_g(G)^n/(n-1).
    $$
\end{theorem}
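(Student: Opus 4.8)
The plan is to establish the two inequalities separately: the upper bound is short and combinatorial, while the lower bound is the real content, for which I would reduce to the regular case of \Cref{thm:labeled-tree}. For the upper bound I would in fact prove that $(n-1)\abs{T(G)}\le\prod_{v\in V(G)}d_G(v)$ for every connected graph $G$. Count the pairs $(S,e)$ with $S\in T(G)$ and $e=uv\in E(S)$; there are exactly $(n-1)\abs{T(G)}$ of them. To such a pair assign the map $f\colon V(G)\to V(G)$ obtained by deleting $e$ from $S$, rooting the component of $S-e$ containing $u$ at $u$ and the component containing $v$ at $v$, letting $f(w)$, for $w\notin\{u,v\}$, be the parent of $w$ in whichever of these rooted trees contains it, and setting $f(u)\defeq v$ and $f(v)\defeq u$. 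Then $f(w)\in N_G(w)$ for every $w$, so $f$ lies in the set of size $\prod_v d_G(v)$, and $(S,e)\mapsto f$ is injective: the functional digraph of $f$ is connected and its only cycle is the $2$-cycle on $\{u,v\}$ (every other arc points toward $u$ or toward $v$), so $e$ is recovered, and then $S$ is recovered as $\{\,\{w,f(w)\}:w\in V(G)\,\}$. Hence $\abs{T(G)}\le d_g(G)^{n}/(n-1)$.

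For the lower bound, write $\abs{T(G)}=e^{H(\calT)}$, where $H$ is Shannon entropy (natural logarithm) and $\calT$ is a uniformly random spanning tree of $G$. Fix a root $r$ and let $p_r(v)$ be the parent of $v$ in $\calT$ rooted at $r$, so $H(\calT)=H\bigl((p_r(v))_{v\ne r}\bigr)$; by the chain rule together with the fact that conditioning does not increase entropy, $H(\calT)\ge\sum_{v\ne r}H\bigl(p_r(v)\mid(p_r(w))_{w\notin\{v,r\}}\bigr)$. Revealing all parents except that of $v$ determines $\calT$ with the edge from $v$ to its parent removed, hence the component $C$ of $r$ in $\calT-v$; conditionally $p_r(v)$ is uniform on $N_G(v)\cap C$, so that term equals $\mathbb E_{\calT}\bigl[\log\abs{N_G(v)\cap C}\bigr]$. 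Averaging over $r\in V(G)$ and regrouping the sum over roots according to the component of $\calT-v$ in which the root lies,
\[ \log\abs{T(G)}\ \ge\ \frac1n\,\mathbb E_{\calT}\Bigl[\,\sum_{v\in V(G)}\ \sum_{j}\ \abs{C_j}\,\log\abs{N_G(v)\cap C_j}\,\Bigr], \]
where the $C_j$ are the components of $\calT-v$, so $\sum_j\abs{C_j}=n-1$ and (as $G$ is simple) $\sum_j\abs{N_G(v)\cap C_j}=d_G(v)$. Comparing this with $\sum_v\log d_G(v)$ term by term, the lower bound reduces, up to lower-order terms, to showing
\[ \mathbb E_{\calT}\Bigl[\,\sum_{v}\ \sum_{j}\ \abs{C_j}\,\log\frac{d_G(v)}{\abs{N_G(v)\cap C_j}}\,\Bigr]\ =\ O\!\left(\tfrac{\log^{2}k}{k}\right)n^{2}. \]

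This last estimate is the crux, and it is where the hypotheses enter. Its summand at $v$ is small unless $\calT-v$ has a large component meeting only few neighbours of $v$ — a bottleneck that a uniform spanning tree typically avoids — and making this quantitative amounts to understanding, for each $v$, the joint distribution of the sizes of the $\deg_{\calT}(v)$ branches of $\calT$ at $v$ and of the number of $G$-neighbours of $v$ in each branch; this is precisely a graph-theoretic balls-into-bins problem, the $n-1$ other vertices playing the role of balls and the branches that of bins. I expect this to be the main obstacle. For $d$-regular $G$ the programme should reproduce \Cref{thm:labeled-tree} with rate $O(\log^{2}d/d)$, and the general case additionally requires careful treatment of the vertices whose degree far exceeds $k$ and of the sparse cuts of $G$ — the latter being exactly what degrades the loss from $\log k/k$ to $\log^{2}k/k$ — after which a reduction to the regular case (for instance by attaching, at each deficient vertex, a small near-regular gadget through a cut vertex, which multiplies $\abs{T(G)}$ by an explicitly computable factor and renders the graph regular) would complete the proof.
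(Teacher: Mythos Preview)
This theorem is not proved in the paper: it is quoted as a result of Kostochka, and only its lower bound is invoked, once, inside the proof of Theorem~\ref{thm:main-anticoncentration} (to obtain $|T(G)|\ge(1-2\cdot10^{-10})^n d^n$ for almost-regular $G$). There is therefore no proof in the paper to compare your proposal against.

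On the substance of your attempt: the upper bound argument is correct and complete. The map $(S,e)\mapsto f$ is a well-defined injection into $\prod_{v}N_G(v)$, because the functional digraph of $f$ is weakly connected with a unique cycle (the $2$-cycle on the endpoints of $e$), from which $e$ and then $S=\bigl\{\{w,f(w)\}:w\in V(G)\bigr\}$ are recovered; this yields $(n-1)\,\abs{T(G)}\le\prod_v d_G(v)$ as claimed.

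The lower bound, however, is only a plan. The entropy framework is set up correctly: the inequality $H(\calT)\ge\sum_{v\ne r}H\bigl(p_r(v)\mid(p_r(w))_{w\notin\{v,r\}}\bigr)$ does follow from the chain rule and monotonicity under conditioning, and your swap argument shows correctly that, conditioned on all other parent pointers, $p_r(v)$ is uniform on $N_G(v)\cap C$. But you explicitly leave the deficit estimate
\[
\mathbb{E}_{\calT}\Bigl[\,\sum_{v}\sum_{j}\abs{C_j}\,\log\frac{d_G(v)}{\abs{N_G(v)\cap C_j}}\,\Bigr]=O\!\left(\frac{\log^2 k}{k}\right)n^2
\]
unproved, and this is the entire content of the lower bound. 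The closing ``regularise via gadgets through cut vertices'' idea is too vague to carry the argument: attaching a gadget at $v$ raises $d(v)$ and multiplies $\abs{T(G)}$ by the number of spanning trees of the gadget, but it also adds new vertices (so $n$ and $d_g$ change), the gadget's internal vertices must themselves have the target degree, and the error term $O(\log^2 k/k)$ refers to the minimum degree of the \emph{original} graph, not of the regularised one. Without a concrete accounting of all these shifts this is not a reduction. In short, your upper bound is a proof; your lower bound is a plausible outline with the hard step missing.
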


So far, we have focused on the number of labeled spanning trees contained in a given graph. A natural next question is to consider the number of unlabeled $n$-vertex trees that are contained in an $n$-vertex graph $G$. Equivalently, the number of isomorphism classes of spanning trees in $G$. We denote $T_{\mathrm{unlabeled}}(G)$ by the set of unlabeled spanning trees that are contained in $G$.
As an unlabeled version of Cayley's formula, a remarkable result of Otter~\cite{Otter} says that the number of unlabeled trees on $n$ vertices is $(1 + o(1))C n^{-5/2} \alpha^n$, where $C \approx 0.535$ and $\alpha \approx 2.956$. In other words, the following is the Otter's formula.

\begin{theorem}[Otter~\cite{Otter}]\label{thm:otter}
    $$
        |T_{\mathrm{unlabeled}}(K_n)| = (1 + o(1))C n^{-5/2} \alpha^n,
    $$ where $C \approx 0.535$ and $\alpha \approx 2.956$.
\end{theorem}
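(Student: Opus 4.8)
The plan is to follow the classical analytic route: reduce everything to a functional equation for \emph{rooted} trees and then apply singularity analysis. Write $r_n$ for the number of unlabeled rooted trees on $n$ vertices and $R(x) = \sum_{n \ge 1} r_n x^n$ for its ordinary generating function. A rooted tree is a root vertex carrying an unordered multiset of rooted subtrees, so P\'olya's enumeration theorem (the Euler transform of a sequence) yields
\[
R(x) = x \exp\!\left( \sum_{k \ge 1} \frac{R(x^k)}{k} \right).
\]
The first task is to extract analytic information from this identity: that $R$ has a positive radius of convergence $\rho = 1/\alpha \in (0,1)$, that $R(\rho) = 1$, that $R$ continues analytically to a slit disk (a $\Delta$-domain) around $\rho$, and that near $x = \rho$ it has a square-root singularity $R(x) = 1 - b\sqrt{1 - x/\rho} + c\,(1 - x/\rho) + O\bigl((1-x/\rho)^{3/2}\bigr)$ with $b > 0$.

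To do this, observe that $\sum_{k \ge 2} R(x^k)/k$ has radius of convergence $\rho^{1/2} > \rho$ and is therefore analytic at $x = \rho$; so locally the equation takes the form $R(x)\,e^{-R(x)} = G(x)$ with $G(x) = x\exp\bigl(\sum_{k\ge2}R(x^k)/k\bigr)$ analytic, positive, and strictly increasing near $\rho$. Since $y \mapsto y e^{-y}$ increases on $(0,1)$ up to the value $1/e$ at $y=1$, where its derivative vanishes, the branch point of $R$ occurs exactly where $G(\rho) = 1/e$, which forces $R(\rho) = 1$; inverting $\psi(y) = ye^{-y}$ near its critical point produces the claimed square-root expansion, and the smooth implicit-function schema (in the spirit of Meir--Moon and Bender, with the aperiodicity of the tree class guaranteeing that $\rho$ is the unique dominant singularity) supplies the analytic continuation to a $\Delta$-domain. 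The Flajolet--Odlyzko transfer theorem then yields
\[
r_n \sim \frac{b}{2\sqrt{\pi}}\, n^{-3/2}\, \alpha^{n},
\]
and $\rho \approx 0.3383$, i.e.\ $\alpha \approx 2.956$, is obtained by solving $G(\rho) = 1/e$ numerically (after bootstrapping the values $R(\rho^k)$ for $k \ge 2$).

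Next, pass from rooted to unrooted trees via Otter's dissimilarity characteristic theorem: for any finite tree, (number of vertex-orbits) $-$ (number of edge-orbits) $+$ (number of edges whose endpoints are interchanged by an automorphism) $= 1$. Summing this identity over all unlabeled $n$-vertex trees --- using that vertex-rooted unlabeled trees are counted by $R(x)$, that edge-rooted unlabeled trees correspond to unordered pairs of rooted trees and are counted by $\tfrac12(R(x)^2 + R(x^2))$, and that trees possessing a symmetry edge are counted by $R(x^2)$ --- gives, for $T(x) = \sum_n t_n x^n$,
\[
T(x) = R(x) - \tfrac12 R(x)^2 + \tfrac12 R(x^2).
\]
Substituting the singular expansion of $R$ near $\rho$ and using $R(\rho)=1$, the combination $R - \tfrac12 R^2$ equals $\tfrac12 - \tfrac12 b^2(1-x/\rho) + O\bigl((1-x/\rho)^{3/2}\bigr)$ --- the $\sqrt{1-x/\rho}$ contribution cancels --- while $\tfrac12 R(x^2)$ is analytic at $\rho$. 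Hence the leading singular term of $T(x)$ is of order $(1-x/\rho)^{3/2}$, and a second application of the transfer theorem (using $[x^n](1-x/\rho)^{3/2} \sim \tfrac{3}{4\sqrt\pi}\,n^{-5/2}\alpha^n$) gives $t_n = (1+o(1))\,C\,n^{-5/2}\alpha^n$, with $C \approx 0.535$ read off from the expansion.

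The main obstacle is the analytic input of the second paragraph: turning the formal identity for $R(x)$ into a genuine analytic statement with a bona fide square-root singularity, and --- the crucial point for invoking the transfer theorems --- establishing continuation of $R$ to a $\Delta$-domain. This amounts to verifying the hypotheses of the smooth implicit-function schema (that $\sum_{k\ge2}R(x^k)/k$ is analytic past $\rho$, so the infinite system collapses to a single equation near the singularity; that the characteristic conditions $G(\rho)=1/e$, $R(\rho)=1$ have a solution with a simple branch point; and aperiodicity). Once $R(\rho)=1$ is in hand, the cancellation of the square-root term in the third paragraph is forced, and it is precisely this cancellation that upgrades the subexponential exponent from $n^{-3/2}$ (rooted trees) to $n^{-5/2}$ (unrooted trees).
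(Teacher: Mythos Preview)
The paper does not prove this statement: Theorem~\ref{thm:otter} is quoted as a classical result of Otter and is used only as background motivation in the introduction, with no proof or sketch given anywhere in the text. So there is no ``paper's own proof'' to compare your proposal against.

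That said, your outline is the standard modern derivation of Otter's asymptotics via analytic combinatorics (P\'olya/Euler transform for the rooted generating function $R(x)$, the smooth implicit-function schema to locate the square-root singularity with $R(\rho)=1$, Otter's dissimilarity identity $T(x)=R(x)-\tfrac12 R(x)^2+\tfrac12 R(x^2)$, and the cancellation of the $\sqrt{1-x/\rho}$ term that promotes the exponent from $-3/2$ to $-5/2$). The sketch is correct in its architecture; you have also correctly identified the genuine technical load, namely the analytic continuation of $R$ to a $\Delta$-domain so that Flajolet--Odlyzko transfer applies. For the purposes of this paper none of this is needed, since the result is simply cited.
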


Note that Theorem~\ref{thm:otter} asserts that there are exponentially many pairwise non-isomorphic spanning trees on $n$ vertices. Regarding non-isomorphic spanning trees, several studies were conducted in the 1970s and 1980s (see~\cite{Vestergaard}); however, in contrast to the extensive literature on counting labeled spanning trees, there are relatively few works addressing the enumeration of pairwise non-isomorphic spanning trees in a given graph. One reason for this scarcity is the wide variety of tree structures and the lack of effective methods to control the sizes of their automorphism groups, which makes such counts inherently difficult. In this context, somewhat surprisingly, we show that every connected almost-regular graph with degrees bigger than some universal constant already contains exponentially many pairwise non-isomorphic spanning trees.
For constants $d, \delta > 0$, we say a graph $G$ is \emph{$(1 \pm \delta)d$-regular}, if all the degrees of $G$ are lies on the interval $[(1 - \delta)d, (1 + \delta)d]$.

\begin{theorem}\label{thm:main-non-isomorphic}
    There exists a universal constant $d_0> 0$ such that the following holds for all integers $d \geq d_0$.
    Let $G$ be a connected $n$-vertex $(1\pm 10^{-10})d$-regular graph. Then we have
    $$
        |T_{\mathrm{unlabeled}}(G)| \geq e^{n/2000}.
    $$
\end{theorem}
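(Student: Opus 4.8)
The plan is to establish the stronger probabilistic statement from the abstract and deduce the theorem by a one‑line averaging step. Write $\delta=10^{-10}$, let $D=\Delta(G)$ be the maximum degree of $G$ (so $D\le(1+\delta)d$ and $G$ has minimum degree at least $(1-\delta)d$), and let $\calT$ be a uniformly random spanning tree of $G$. Every spanning tree of $G$ is an $n$-vertex tree lying in exactly one isomorphism class, so $\sum_{T}\Pr[\calT\simeq_{\mathrm{iso}}T]=1$, the sum running over isomorphism classes of $n$-vertex trees; hence $|T_{\mathrm{unlabeled}}(G)|\ge 1/\max_T\Pr[\calT\simeq_{\mathrm{iso}}T]$, and it suffices to prove
\[
\Pr[\calT\simeq_{\mathrm{iso}}T]\le e^{-n/1000}\qquad\text{for every $n$-vertex tree $T$.}
\]
Fix $T$, let $\mathrm{emb}(T,G)$ be the number of edge-preserving bijections $V(T)\to V(G)$, and observe that each spanning subtree of $G$ isomorphic to $T$ accounts for exactly $|\mathrm{Aut}(T)|$ of them, so $\Pr[\calT\simeq_{\mathrm{iso}}T]=\mathrm{emb}(T,G)/(|\mathrm{Aut}(T)|\cdot|T(G)|)$. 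The denominator is handled by Theorem~\ref{thm:Kostochka}: as $G$ has minimum degree and geometric average degree at least $(1-\delta)d$, for $d\ge d_0$ large we get $|T(G)|\ge((1-\delta)d)^n e^{-O(\log^2 d/d)n}\ge D^n e^{-n/9000}$. Thus everything reduces to the estimate
\[
\mathrm{emb}(T,G)\;\le\;|\mathrm{Aut}(T)|\cdot D^n\cdot e^{-n/900},
\]
which is where all the work lies.

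For the upper bound I would root $T$ at a leaf, write $c(v)$ for the number of children of $v$, and build an embedding of $T$ in breadth-first order. When placing the $i$-th child of an internal non-root vertex, its image must avoid, among the $G$-neighbours of its already-placed parent's image, both the image of the grandparent and the images of the $i-1$ earlier siblings, so there are at most $D-i$ choices; multiplying out,
\[
\mathrm{emb}(T,G)\;\le\;n\,D\prod_{v\ne r}(D-1)^{\underline{c(v)}}\;\le\;D^n\exp\!\Big(-\tfrac{1}{2D}\textstyle\sum_v c(v)^2\Big)\cdot n^{O(1)}.
\]
If $\sum_v c(v)^2\ge Dn/200$ this is at most $D^n e^{-n/400+o(n)}\le D^n e^{-n/900}$, so that ``bushy'' case is done; and any $T$ with $|\mathrm{Aut}(T)|\ge e^{n/700}$ (e.g.\ complete binary trees) already satisfies the target using only the trivial bound $\mathrm{emb}(T,G)\le nD^{n-1}$. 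So we may assume $T$ is \emph{thin}: $\sum_v c(v)^2<Dn/200$. For thin $T$ the forced (sibling and grandparent) collisions above cost only a factor $e^{-\Theta(n/D)}$, which is worthless once $d$ is large, and the missing savings must come from \emph{coincidental} collisions: the image of a vertex's parent almost always has many further occupied neighbours in $G$, for the simple reason that $G$ carries $\tfrac{(1\pm\delta)dn}{2}$ edges but at most $n-1$ of them can survive in the embedded tree.

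To make ``coincidental collisions'' quantitative, I would model a uniformly random breadth-first embedding as the process that sends each vertex to a uniformly random $G$-neighbour of its parent's image; then $\mathrm{emb}(T,G)\le nD^{n-1}\cdot\Pr[\text{the process places $n$ distinct vertices}]$, and one needs this probability to be $e^{-\Omega(n)}$. Summing step by step the number $a_k$ of already-occupied neighbours of the current parent's image along a run that stays injective essentially recovers the whole edge set of $G$, giving a collision budget of order $dn$; feeding that budget through a convexity estimate for $\prod_k(D-a_k)$ would charge each step a constant-factor loss on average \emph{provided} the budget is neither concentrated in the last few steps (where it is wasted because few neighbours remain free) nor suppressed by the embedding failing to spread across $G$ — and this is exactly the point the graph-theoretic balls--into--bins model is designed to control, by reducing the way a tree's ``demand'' distributes over the vertices of $G$ (which play the role of bins) to a tractable occupancy and anticoncentration statement. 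I expect proving that balls--into--bins inequality, and verifying that it applies uniformly over all connected almost-regular graphs of large degree and all thin trees (including awkward configurations such as $T$ having many leaves whose images could a priori cover all of $G$'s edges), to be the main obstacle; the reductions above and the bushy and highly-symmetric cases should be routine by comparison.
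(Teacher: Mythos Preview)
Your outer reduction matches the paper's: the theorem follows from $\Pr[\calT\simeq_{\mathrm{iso}} T]\le e^{-\Omega(n)}$, which in turn reduces via Theorem~\ref{thm:Kostochka} to showing $N(T;G)\le e^{-\Omega(n)}d^n$ for every $n$-vertex tree $T$. The divergence, and the gap, is in how that embedding bound is obtained. The paper splits not on $\sum_v c(v)^2$ or on $|\mathrm{Aut}(T)|$ but on the number $L$ of leaves of $T$. When $L<n/10$ it orients $T$ so that \emph{every} vertex has out-degree exactly one (replace one non-leaf edge by a digon and direct each subtree toward the corresponding endpoint); any embedding into the digon-doubled graph $D$ is then literally a sample point of the product space $\prod_{u\in V(G)} N_G(u)$ in which at most $L$ bins receive no ball, and Theorem~\ref{thm:balls-into-bins-regular} gives $\Pr[I_0\le n/10]\le e^{-\Omega(n)}$ since $I_0$ concentrates near $n/e$. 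When $L\ge n/10$ it embeds $T$ minus its leaves greedily in at most $n((1+\delta)d)^{n-1-L}$ ways and counts each extension to the leaves as a perfect matching in a bipartite graph of maximum degree $(1+\delta)d$; Br\'egman--Minc bounds those by roughly $((1+\delta)d/e)^L$, and the factor $e^{-L}\le e^{-n/10}$ is the saving. Note that $|\mathrm{Aut}(T)|$ never enters: the paper bounds $N(T;G)$ directly, not $\mathrm{emb}(T,G)$.

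Your ``thin, low-symmetry'' case absorbs essentially every tree once $d$ is large---a path, a caterpillar with one pendant leaf per spine vertex, or any tree whose child-count histogram is close to a Poisson law with mean $1$ all satisfy $\sum_v c(v)^2=O(n)\ll Dn/200$ and can have trivial automorphism group---and neither of the paper's two mechanisms appears in your sketch. The step $\mathrm{emb}(T,G)\le nD^{n-1}\Pr[\psi\ \text{injective}]$ is, in an almost-regular graph, an identity up to a factor $(1\pm\delta)^{n}$, so it is not a reduction; and the convexity estimate for $\prod_k(D-a_k)$ concerns the $a_k$'s along one fixed injective run and says nothing about how many runs exist (maximising over partial histories at each step only recovers the grandparent/sibling bound you already discarded). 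Most concretely, take a tree whose child-count profile matches $\mathrm{Poisson}(1)$: then $L\approx n/e>n/10$, and the in-degree sequence of the oriented tree already agrees with the typical output of the balls--into--bins process, so no statistic $I_k$ is forced away from its mean and the occupancy route yields nothing. For such trees the Br\'egman--Minc input is genuinely required, and that is the missing idea in your plan.
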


We prove Theorem~\ref{thm:main-non-isomorphic} by establishing a stronger statement, Theorem~\ref{thm:main-anticoncentration}: in any connected almost-regular graph $G$, the largest collection of pairwise isomorphic spanning trees is exponentially smaller than $|T(G)|$. Probabilistically, this yields a strong anticoncentration property for random spanning trees relative to the set of unlabeled $n$-vertex trees, which is of independent interest.

\begin{theorem}\label{thm:main-anticoncentration}
    There exists a universal constant $d_0> 0$ such that the following holds for all integers $d \geq d_0$. Let $G$ be a connected $n$-vertex $(1\pm 10^{-10})d$-regular graph and $\calT$ be a random spanning tree chosen from the set of all spanning trees in $G$ uniformly at random.
    Then for every $n$-vertex tree $T$, we have
    $$
        \Pr[\calT \simeq_{\mathrm{iso}} T] \leq e^{-n/2000}.
    $$
\end{theorem}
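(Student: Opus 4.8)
The plan is to recast the bound probabilistically and then prove the stronger counting statement $N \le |T(G)|\,e^{-n/2000}$, where $N := |\{S \in T(G): S \simeq_{\mathrm{iso}} T\}|$. For the denominator I would apply Kostochka's Theorem~\ref{thm:Kostochka}: since every degree of $G$ lies in $[(1-10^{-10})d,(1+10^{-10})d]$, both the minimum degree and the geometric average degree are at least $(1-10^{-10})d \ge d_0$, so for $d_0$ large enough $|T(G)| \ge d^n e^{-n/10^9}$. Thus it suffices to prove $N \le d^n e^{-n/1000}$. Finally, every injective homomorphism $T \to G$ is automatically onto $V(G)$, hence realises a spanning tree isomorphic to $T$, and each such spanning tree is realised by exactly $|\mathrm{Aut}(T)| \ge 1$ of them; so $N \le \mathrm{inj}(T,G)$, the number of embeddings of $T$ as a subgraph of $G$, and it is enough to show $\mathrm{inj}(T,G) \le d^n e^{-n/1000}$. (In particular the automorphism group of $T$ never needs to be controlled.)

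For this I would run an entropy (chain-rule) argument on a uniformly random embedding. Let $\phi$ be uniform over the $\mathrm{inj}(T,G)$ embeddings $T \to G$, and independently let $\tau = (v_1,\dots,v_n)$ be a uniformly random ordering of $V(T)$ such that every prefix $\{v_1,\dots,v_i\}$ induces a connected subtree of $T$; then for $i \ge 2$ the vertex $v_i$ has a unique neighbour $p_i$ among $v_1,\dots,v_{i-1}$. Writing $a_i := |N_G(\phi(p_i)) \cap \{\phi(v_1),\dots,\phi(v_{i-1})\}|$ and using that $\phi(v_i)$ must be a not-yet-used $G$-neighbour of $\phi(p_i)$, the chain rule and $\log(D-a) \le \log D - a/D$ give
\[
\log \mathrm{inj}(T,G) = H(\phi) = \mathbb{E}_\tau\Big[\sum_{i=2}^n H\big(\phi(v_i) \mid \phi(v_1),\dots,\phi(v_{i-1}),\tau\big)\Big] + O(\log n) \le n\log d + 10^{-10}n + \log n - \frac{1}{(1+10^{-10})d}\,\mathbb{E}_\tau\Big[\sum_{i=2}^n a_i\Big].
\]
So everything reduces to a lower bound of the form $\mathbb{E}_\tau\big[\sum_{i} a_i\big] \ge c\, dn$ for an absolute constant $c>0$, valid for every spanning tree $S = \phi(T)$ of $G$.

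This last estimate is precisely the graph-theoretic balls-into-bins statement: thinking of the vertices, revealed one by one along $\tau$, as balls landing in bins (the $G$-neighbourhoods, each of size $(1\pm10^{-10})d$), we want to show that a constant fraction of a typical vertex's bin is already occupied by the time one of its tree-children is revealed. Concretely, $\sum_{i\ge 2} a_i = \sum_{j<i}\mathbf 1[\phi(v_j)\sim_G\phi(p_i)]$, which after grouping by the edges of $G$ shows that each of the $\Theta(dn)$ edges $\{x,z\}\in E(G)$ contributes, in expectation over $\tau$, a sum of terms $\Pr_\tau[\text{one endpoint of the edge revealed before a given tree-child of the other endpoint}]$; the point is that, because the image is \emph{all} of $V(G)$ and a random connected ordering puts a vertex "in the middle" of the process in an averaged sense, these contributions sum to $\Omega(dn)$. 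The main obstacle is making this rigorous: the connected ordering is not exchangeable and the placements are strongly correlated, so one must handle the extremal configurations — e.g.\ when $S$ has almost all vertices as leaves, $\Theta(dn)$ edges of $G$ join two leaves and contribute nothing, but then $S$ is forced to contain $\Omega(n/d)$ vertices of $G$-degree close to $d$, each of which has roughly $d$ leaf-children that are mutually exchangeable under $\tau$ and hence contribute $\Omega(d^2)$, restoring the total $\Omega(dn)$. Granting this lemma with any absolute constant $c>0$, the displayed inequality yields $\mathrm{inj}(T,G) \le d^n e^{-(c/2)n}$ for $d \ge d_0$ and $n$ large, and combining with the Kostochka bound gives $\Pr[\calT \simeq_{\mathrm{iso}} T] \le e^{-(c/2 - 10^{-9})n} \le e^{-n/2000}$. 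Theorem~\ref{thm:main-non-isomorphic} then follows immediately, since if every isomorphism class has probability at most $e^{-n/2000}$ there must be at least $e^{n/2000}$ of them.
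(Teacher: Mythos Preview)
Your outer scaffolding matches the paper's: reduce to a counting estimate $N(T;G)\le d^n e^{-\Omega(n)}$ and combine with Kostochka's lower bound on $|T(G)|$. Where you diverge is in how the counting estimate is obtained. The paper does \emph{not} run a single entropy argument; it splits on the number of leaves of $T$:
\begin{itemize}
\item If $T$ has fewer than $n/10$ leaves, every labelled copy encodes as a spanning out\nobreakdash-degree\nobreakdash-one sub-digraph of the bidirected $G$ with fewer than $n/10$ vertices of in-degree $0$. Theorem~\ref{thm:balls-into-bins-regular} (with $k=0$), proved via Talagrand, shows that only an $e^{-\Omega(n)}$ fraction of the $\approx d^n$ such digraphs have $I_0\le n/10$, since $I_0$ concentrates near $n/e$.
\item If $T$ has at least $n/10$ leaves, embed the non-leaf part greedily ($\le n\,((1+10^{-10})d)^{\,n-1-|L|}$ ways) and bound the extensions to the leaves by a perfect-matching count; Br\'egman--Minc then gives roughly $(d/e)^{|L|}$ instead of $d^{|L|}$, saving a factor $e^{|L|}\ge e^{n/10}$.
\end{itemize}

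Your entropy/random-ordering framework is correct up to the point where everything is reduced to the inequality $\mathbb{E}_\tau\bigl[\sum_i a_i\bigr]\ge c\,dn$ for every spanning tree $S=\phi(T)$ of $G$, but that inequality \emph{is} the whole difficulty, and you have not proved it --- you explicitly say ``granting this lemma''. The heuristic you give handles only the extreme configuration in which all but $O(n/d)$ vertices are leaves, and even there the assertion that the $\Omega(n/d)$ non-leaves ``each have roughly $d$ leaf-children'' is unjustified (the non-leaves may have very unequal child counts; one at least needs the convexity step $\sum_j\binom{k_j}{2}\ge (n-1)^2/(2m)$ together with $m=O(n/d)$). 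For trees with, say, between $n/10$ and $n - n/d$ leaves, neither your ``$\Omega(d^2)$ per hub'' heuristic nor the informal ``a random connected ordering puts a vertex in the middle'' assertion delivers the bound, and connected orderings are far from exchangeable, so linearity alone is not enough. Note also that you actually need $c$ of size at least about $1/1000$ to land at $e^{-n/2000}$, not ``any absolute constant $c>0$''.

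In short, the reduction is fine but the missing lemma is precisely where the work lies; the paper sidesteps it via the two-case argument above (Talagrand-based balls--into--bins for few leaves, Br\'egman--Minc for many leaves).
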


The concept of anticoncentration has been extensively studied over the past few decades and has found numerous applications in probability theory, random matrix theory, and statistics. Beyond the classical Littlewood--Offord–type problems~\cite{Littlewood-Offord,Littlewood-Offord-Erdos} concerning sums of independent random variables, recent work has investigated anticoncentration phenomena for a variety of mathematical objects. For example, edge-statistics in graphs~\cite{Alon-Hefetz-Krivelevich-Tyomkyn,Fox-Kwan-Sauermann,Kwan-Sah-Sauermann-Sawhney}, subgraph counts~\cite{Meka-Nguyen-Vu,Sah-Sawheny}, permutation variants of the Littlewood--Offord problem~\cite{Pawlowski,Do-Hoi-Kiet-Tuan-Vu}. In this context, Theorem~\ref{thm:main-anticoncentration} reveals a new anticoncentration phenomenon for spanning trees.

Most existing proofs concerning the enumeration of spanning trees proceed by analyzing the spectrum of the Laplacian matrix via the Matrix--Tree Theorem, by employing generating functions, or by directly exploiting the structural properties of the underlying graph. In contrast, our proofs of Theorems~\ref{thm:main-non-isomorphic} and~\ref{thm:main-anticoncentration} are based on probabilistic techniques. Specifically, we introduce a graph-theoretic variant of the classical balls--into--bins model (see Section~\ref{sec:balls-into-bins}) and analyze it using several concentration inequalities such as Talagrand’s inequality. Also, we use well-known Br\'{e}gman--Minc inequality~\cite{Bregman}. We expect that these methods will find further applications in related problems.

Finally, we note that the conclusions of Theorems~\ref{thm:main-non-isomorphic} and \ref{thm:main-anticoncentration} fail if a minimum degree condition replaces the almost-regularity assumption. For example, the number of unlabeled spanning trees of the complete bipartite graph $K_{d,n-d}$ grows only polynomially in $n$, with degree and coefficients depending solely on $d$.


\section{Balls--into--bins on almost regular graphs}\label{sec:balls-into-bins}

To prove Theorem~\ref{thm:main-anticoncentration}, we adopt a graph-theoretic interpretation of the classical probabilistic model known as the balls--into--bins model. We refer the reader to Section~\ref{sec:proof} for a detailed explanation of how this model is employed in our setting.

Although numerous variants of the balls--into--bins model have been studied, the most basic and well-understood version is defined as follows. Suppose there are $m$ balls and $n$ bins. Each ball independently selects one bin uniformly at random from the $n$ bins. This process has been extensively analyzed in the literature, with particular attention to quantities such as the maximum load among the bins, the number of balls required to ensure that every bin receives at least one ball, and the asymptotic distribution of the number of bins containing a given number of balls. These analyses have found many applications in theoretical computer science, notably in problems related to load balancing and hashing. (See~\cite{Balls-into-bins-1,Balls-into-bins-2,Balls-into-bins-3}.)

Observe that one can relate the balls-into-bins model to a probabilistic model on a complete bipartite graph. Let $H$ be a bipartite graph on a bipartition $X \cup Y$, where $|X| = m$ and $|Y| = n$. Consider a random subgraph $H'$ on $H$ that is constructed by choosing a vertex $y\in Y$ from the neighborhood of a vertex $x\in X$ uniformly at random and putting $xy$ as an edge of $H'$, where all choices are made independently. If $H$ is a complete bipartite graph, then analyzing the degrees of vertices of $Y$ in $H'$ is equivalent to the classical balls-into-bins model with $m$ balls and $n$ bins. We say this graph analog of the balls--into--bins model as \emph{$(X, Y; H)$-model} and the random subgraph $H'$ that is generated by the demonstrated process as the \emph{output graph} of $(X, Y; H)$-model.

In this section, we consider the case where the bipartite graph $H$ is almost regular. This setting is of independent interest and will also play an important role in the proofs of our main results, Theorems~\ref{thm:main-non-isomorphic} and \ref{thm:main-anticoncentration}.

\begin{theorem}\label{thm:balls-into-bins-regular}
    For every integer $k \geq 0$ and real number $\delta \in \left[0, \frac{10^{-8}}{(k+2)!^2}\right]$, there exists a constant $d_0 > 0$ such that the following holds for all integers $d \geq d_0$. Consider the $(X, Y; H)$-model with $|X| = |Y| = n$, where $H$ is a $(1\pm \delta)d$-regular bipartite graph. Let $H'$ be the output graph of the $(X, Y; H)$-model, and let $I_k$ be the random variable counting the number of vertices in $Y$ of degree exactly $k$ in $H'$. Then, for every $\gamma \geq 600(k+1)\sqrt{\delta}$,
    \begin{equation}\label{eq:exponential-decay}
        \Pr\left[\left| I_k - \frac{e^{-1}}{k!}n \right| > \gamma n \right] \leq 10e^{-\frac{(\gamma - 600(k+1)\sqrt{\delta})^2}{32(k+1)}n}.
    \end{equation}
\end{theorem}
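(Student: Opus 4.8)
The plan is to prove concentration of $I_k$ around its approximate mean $e^{-1}n/k!$ in two stages: first control the expectation, then prove concentration around the expectation via Talagrand's inequality. For the expectation, I would fix a vertex $y \in Y$ and estimate $\Pr[\deg_{H'}(y) = k]$. Each neighbor $x$ of $y$ in $H$ includes the edge $xy$ in $H'$ independently with probability $1/\deg_H(x) \in [\frac{1}{(1+\delta)d}, \frac{1}{(1-\delta)d}]$, and $y$ has $\deg_H(y) \in [(1-\delta)d,(1+\delta)d]$ such neighbors. So $\deg_{H'}(y)$ is a sum of independent (non-identical) Bernoulli variables with parameters all within a $(1\pm O(\delta))$ factor of $1/d$; a standard Poisson-approximation estimate (e.g. comparing with $\mathrm{Bin}(d, 1/d)$ and tracking the $O(\delta)$ and $O(1/d)$ error terms) gives $\Pr[\deg_{H'}(y) = k] = \frac{e^{-1}}{k!} + O_k(\delta) + o_d(1)$, where the $O_k(\delta)$ term is what forces the $\sqrt\delta$-type slack and the $(k+2)!^2$ restriction on $\delta$. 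Summing over $y \in Y$ yields $\bigl|\mathbb{E}[I_k] - \frac{e^{-1}}{k!}n\bigr| \leq C(k)\sqrt{\delta}\, n$ for a suitable $C(k) \leq 300(k+1)$, say, once $d_0$ is large enough to absorb the $o_d(1)$ term.

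For concentration around the mean, I would view the output graph $H'$ as a function of the independent choices $(c_x)_{x \in X}$, where $c_x \in N_H(x)$ is the bin chosen by $x$. The random variable $I_k$ is not Lipschitz in the worst case, but it is \emph{$(1,k+1)$-certifiable} in the sense of Talagrand: changing one coordinate $c_x$ changes the degrees of at most two vertices of $Y$ (the old and new choice of $x$), hence changes $I_k$ by at most $2$; and to certify that $I_k \geq s$ it suffices to exhibit, for each of $s$ vertices $y$ of degree exactly $k$, the $\leq k$ coordinates $x$ with $c_x = y$ plus... — more precisely one uses that the event ``$\deg_{H'}(y) = k$'' is determined by revealing, for the (at most) $(1+\delta)d$ neighbors $x$ of $y$, whether $c_x = y$; but to keep the certificate small one instead certifies ``$\deg_{H'}(y)\le k$'' is the obstruction. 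The cleanest route is Talagrand's inequality in the ``certifiable Lipschitz'' formulation: since each $y$ counted contributes a certificate of size $k$ (the indices mapping to it) and moving one coordinate affects $I_k$ by $\le 2$, one gets $\Pr[|I_k - \mathbb{E} I_k| > t n] \le 4 e^{-t^2 n/(8(k+1))}$ or similar. Combining with the expectation bound via the triangle inequality, and writing $\gamma = \gamma' + C(k)\sqrt\delta$ with $C(k) \le 600(k+1)$ chosen to swallow both the expectation slack and constant factors, gives the stated bound \eqref{eq:exponential-decay}.

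I would carry out the steps in this order: (1) set up the independent-choice representation of $H'$ and record that each $c_x$ is uniform on $N_H(x)$; (2) prove the pointwise estimate $\Pr[\deg_{H'}(y)=k] = e^{-1}/k! + O_k(\sqrt\delta)$ with explicit constants, using the almost-regularity of both sides and a Poisson-approximation lemma, and sum to bound $\mathbb{E} I_k$; (3) establish the Lipschitz/certifiability properties of $I_k$ as a function of $(c_x)_x$; (4) apply Talagrand's inequality to get exponential concentration of $I_k$ about $\mathbb{E} I_k$; (5) combine (2) and (4) and tune constants to obtain the claimed inequality, checking that the hypothesis $\delta \le 10^{-8}/(k+2)!^2$ makes all error terms fit under the $600(k+1)\sqrt\delta$ threshold.

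The main obstacle I expect is Step (2): getting the \emph{explicit, $k$-dependent} constant in $\bigl|\Pr[\deg_{H'}(y)=k] - e^{-1}/k!\bigr| = O_k(\sqrt\delta)$ with the right dependence (so that it is $\le$ a small multiple of $(k+1)\sqrt\delta$ under the stated range of $\delta$) requires carefully quantifying how the product $\prod_{x \in N_H(y)}(1 - 1/\deg_H(x))$ and the associated elementary-symmetric-function terms deviate from the Poisson values $e^{-1}/k!$ when the $\deg_H(x)$ and $\deg_H(y)$ are only pinned down to within a $(1\pm\delta)$ factor; the factorial loss $(k+2)!^2$ in the hypothesis is exactly the price of controlling the $k$-th elementary symmetric polynomial of the perturbed Bernoulli parameters. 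A secondary technical point is verifying that $I_k$ genuinely fits the hypotheses of the version of Talagrand's inequality used (in particular that bounded effect of one coordinate combines correctly with bounded certificate size), so that the exponent is linear in $n$ with a $1/(k+1)$-type factor rather than something weaker.
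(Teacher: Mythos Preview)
Your plan has a genuine gap at the Talagrand step. The random variable $I_k$ is \emph{not} certifiable with certificates of size $O(k\ell)$ as you claim. To certify that a particular $y\in Y$ has degree \emph{exactly} $k$ in $H'$, it is not enough to exhibit the $k$ coordinates $x$ with $c_x=y$; you must also witness that every one of the remaining $\deg_H(y)-k \approx d-k$ neighbours of $y$ did \emph{not} choose $y$. A coordinate set that omits one such neighbour $x'$ allows the adversary to set $c_{x'}=y$, bumping the degree to $k+1$ and destroying the count. Hence any honest certificate for $\{I_k\ge\ell\}$ has size $\Theta(d\ell)$, not $\Theta(k\ell)$, and the Talagrand bound you would obtain has exponent of order $n/d$ rather than $n$ --- far too weak. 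Your own hesitation (``plus\ldots'', ``is the obstruction'') is exactly at this point, and the sentence does not resolve it.

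The paper's remedy is precisely to avoid applying Talagrand to $I_k$. It introduces the \emph{monotone} quantity $I_{>m}$ (number of $y\in Y$ with $\deg_{H'}(y)>m$), which \emph{is} $f$-certifiable with $f(\ell)=(m+1)\ell$: to witness $I_{>m}\ge\ell$, exhibit $\ell$ vertices and for each $m+1$ balls landing in it, and no upper-bound information is needed. One also checks $I_{>m}$ is $1$-Lipschitz. Talagrand then gives exponential concentration for $I_{>m}$, and $I_k=I_{>k-1}-I_{>k}$ (or $I_0=n-I_{>0}$) transfers it to $I_k$. A second ingredient you are missing is that Talagrand concentrates around the \emph{median}, so the paper first proves a variance bound $\mathrm{Var}[I_m]=O((k+1)\delta)\cdot(e^{-1}n/m!)^2$ and uses Chebyshev to locate the median of $I_{>m}$ within $O((k+1)\sqrt{\delta}\,n)$ of its Poisson value; this Chebyshev step, not the expectation estimate, is the source of the $\sqrt{\delta}$ slack in the final statement. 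Your expectation computation actually gives an $O((k+1)\delta)$ error, which is stronger than the $O(\sqrt{\delta})$ you wrote.
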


We remark that Theorem~\ref{thm:balls-into-bins-regular} shows that when $H$ is almost regular, the degree distribution of the output graph of the $(X, Y; H)$-model converges sharply to the Poisson distribution with mean $1$. In particular, the deviation from the Poisson expectation exhibits exponential decay.

To establish the exponential tail bound in \eqref{eq:exponential-decay}, we apply Talagrand’s concentration inequality~\cite[Section 7.7]{Alon-Spencer}. For each $i\in [n]$, let $\Omega_i$ be a probability space and $\Omega := \prod_{i\in [n]} \Omega_i$ has the product measure. We call $h: \Omega \to \mathbb{R}$ a \emph{$K$-Lipschitz} function if $|h(\mathbf{x}) - h(\mathbf{y})| \leq K$ whenever $\mathbf{x}, \mathbf{y}$ differ in at most one coordinate. We say $h$ is \emph{$f$-certificate} for a function $f:\mathbb{N}\to \mathbb{N}$ if whenever $h(\mathbf{x}) \geq \ell$, there exists a coordinate set $J\subseteq [n]$ with $|J| \leq f(\ell)$ such that all $\mathbf{y}\in \Omega$ that agree with $\mathbf{x}$ on the coordinates in $J$ have $h(\mathbf{y}) \geq \ell$. Then the following is Talagrand's concentration inequality.

\begin{lemma}[Talagrand]\label{lem:Talagrand}
    Let $\Omega_{i}$ be a probability space for each $i\in [n]$ and $\Omega := \prod_{i\in [n]} \Omega_i$ has the product measure. Let $h: \Omega \to \mathbb{R}$ be a $K$-Lipschitz and $f$-certificate function. Let $\mathbf{x}$ be a random point chosen from $\Omega$ according to the measure on $\Omega$. Then for any $b$ and  $t \geq 0$, the following holds.
    $$
        \Pr[h(\mathbf{x}) \leq b - tK\sqrt{f(b)}]\Pr[h(\mathbf{x}) \geq b] \leq e^{-t^2/4}.
    $$
\end{lemma}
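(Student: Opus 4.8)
The final statement is Talagrand's concentration inequality, Lemma~\ref{lem:Talagrand}, so the task is to prove this inequality from the geometric form of Talagrand's theorem on product spaces. I would not attempt to reprove Talagrand's isoperimetric inequality from scratch; instead I would cite it in its standard convex-distance form and deduce the stated functional version, following the treatment in \cite[Section 7.7]{Alon-Spencer}. Recall the convex distance: for $A \subseteq \Omega$ and $\mathbf{x} \in \Omega$, one defines $\rho(A,\mathbf{x}) = \sup_{\|\boldsymbol\alpha\|_2 \le 1} \inf_{\mathbf{y}\in A} \sum_{i : x_i \ne y_i} \alpha_i$, and Talagrand's theorem states that $\Pr[A]\cdot\Pr[\rho(A,\cdot) \ge t] \le e^{-t^2/4}$ for every $t \ge 0$. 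The plan is to apply this with $A := \{\mathbf{z}\in\Omega : h(\mathbf{z}) \le b - tK\sqrt{f(b)}\}$ and then show that the second factor $\Pr[\rho(A,\cdot)\ge t]$ dominates $\Pr[h(\mathbf{x})\ge b]$.

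\textbf{Key steps.} First I fix $b$ and $t \ge 0$, set $s := b - tK\sqrt{f(b)}$, and let $A = \{h \le s\}$; the left factor in the desired bound is exactly $\Pr[A]$, so it remains to prove $\Pr[h(\mathbf{x}) \ge b] \le \Pr[\rho(A,\mathbf{x}) \ge t]$. For this I would show the pointwise implication: if $\mathbf{x}$ satisfies $h(\mathbf{x}) \ge b$, then $\rho(A,\mathbf{x}) \ge t$. Fix such an $\mathbf{x}$ and let $\ell$ be the integer with $\ell = \lceil b\rceil$ (or simply work with $h(\mathbf{x})\ge b$ and use monotonicity of $f$ appropriately; the cleanest route is to take $\ell := h(\mathbf{x})$ when $h$ is integer-valued, and otherwise round, noting $f$ is nondecreasing on integers). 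By the $f$-certificate hypothesis applied at level $\ell$, there is an index set $J$ with $|J| \le f(\ell) \le f(b)$ such that every $\mathbf{y}$ agreeing with $\mathbf{x}$ on $J$ has $h(\mathbf{y}) \ge \ell \ge b > s$, hence no such $\mathbf{y}$ lies in $A$. Now I choose the test vector $\boldsymbol\alpha$ in the definition of $\rho$ to be supported on $J$: set $\alpha_i = 1/\sqrt{|J|}$ for $i \in J$ and $\alpha_i = 0$ otherwise, so $\|\boldsymbol\alpha\|_2 = 1$. For any $\mathbf{y} \in A$, since $\mathbf{y}$ cannot agree with $\mathbf{x}$ on all of $J$, the set $\{i : x_i \ne y_i\}$ meets $J$; thus $\sum_{i : x_i\ne y_i}\alpha_i \ge$ the contribution from any single disagreeing coordinate in $J$, which is $1/\sqrt{|J|} \ge 1/\sqrt{f(b)}$. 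Actually more care is needed: I want $\rho(A,\mathbf{x}) \ge t$, i.e.\ a lower bound of $t$, not $1/\sqrt{f(b)}$, so the correct test vector must be scaled against the Lipschitz constant — this is where $K$ and $t$ enter. The right argument is: the $K$-Lipschitz property forces that to move from $\mathbf{x}$ (with $h(\mathbf{x})\ge b$) down into $A$ (where $h \le b - tK\sqrt{f(b)}$) one must change coordinates whose total "weighted count" is large. Concretely, I would instead invoke the certificate at every level and use a union/covering argument over $J$, or — following Alon--Spencer precisely — argue that for $\mathbf{y}\in A$ the Hamming-type weighted distance restricted to a suitable certificate set is at least $t\sqrt{f(b)}/\sqrt{f(b)}\cdot(\text{something})$; the bookkeeping yields $\sum_{i:x_i\ne y_i}\alpha_i \ge t$ with the choice $\alpha_i = 1/\sqrt{|J|}$ on $J$ once one observes that $h$ dropping by at least $tK\sqrt{f(b)}$ across coordinates of Lipschitz constant $K$ requires at least $t\sqrt{f(b)}$ changed coordinates in a set of size $\le f(b)$, giving a weighted sum $\ge t\sqrt{f(b)}/\sqrt{f(b)} = t$. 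Taking the sup over unit $\boldsymbol\alpha$ and the inf over $\mathbf{y}\in A$ then gives $\rho(A,\mathbf{x}) \ge t$. Finally, substituting into Talagrand's theorem, $\Pr[h \le b - tK\sqrt{f(b)}]\cdot\Pr[h \ge b] \le \Pr[A]\cdot\Pr[\rho(A,\cdot)\ge t] \le e^{-t^2/4}$.

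\textbf{Main obstacle.} The delicate point — and the step I expect to require the most care — is the quantitative bridge between the $K$-Lipschitz bound, the $f$-certificate property, and the convex distance $\rho$: namely verifying that the certificate set $J$ of size $\le f(b)$ together with the Lipschitz constant $K$ forces every point of $A$ to differ from $\mathbf{x}$ in enough coordinates of $J$ to push the normalized sum $\sum_{i:x_i\ne y_i}\alpha_i$ above $t$. One has to argue that changing fewer than $t\sqrt{f(b)}$ coordinates can decrease $h$ by strictly less than $tK\sqrt{f(b)}$ (by Lipschitzness), hence cannot reach $A$; combined with $|J|\le f(b)$ this pins down $\rho(A,\mathbf{x})\ge t$. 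Getting the constants and the direction of the inequalities exactly right (including the harmless integer-rounding of $b$ in $f(b)$, using that $f$ may be taken nondecreasing) is the bulk of the work; everything else is a direct application of Talagrand's isoperimetric theorem as stated in \cite[Section 7.7]{Alon-Spencer}.
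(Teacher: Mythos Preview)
The paper does not prove Lemma~\ref{lem:Talagrand} at all: it is quoted as a black box, attributed to Talagrand and referenced to \cite[Section~7.7]{Alon-Spencer}. So there is no ``paper's own proof'' to compare against; your proposal is simply the standard textbook derivation of the functional form from Talagrand's convex-distance inequality, which is exactly where the paper points.

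Your outline is correct in spirit, but the write-up is muddled at the one place that matters. You correctly set $A=\{h\le b-tK\sqrt{f(b)}\}$ and aim to show that $h(\mathbf{x})\ge b$ forces $\rho(A,\mathbf{x})\ge t$. Your first attempt---observing that any $\mathbf{y}\in A$ must differ from $\mathbf{x}$ in at least one coordinate of the certificate set $J$---only gives $\rho\ge 1/\sqrt{|J|}$, as you yourself notice. You then gesture at the right fix (``changing fewer than $t\sqrt{f(b)}$ coordinates can decrease $h$ by strictly less than $tK\sqrt{f(b)}$'') but never actually carry it out. The missing step is the explicit construction of an intermediate point: given $\mathbf{y}\in A$, let $J'=\{i\in J: x_i\neq y_i\}$ and form $\mathbf{z}$ from $\mathbf{y}$ by overwriting the coordinates in $J'$ with those of $\mathbf{x}$. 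Then $\mathbf{z}$ agrees with $\mathbf{x}$ on all of $J$, so the certificate gives $h(\mathbf{z})\ge b$; since $\mathbf{z}$ and $\mathbf{y}$ differ in $|J'|$ coordinates, the $K$-Lipschitz bound yields $K|J'|\ge h(\mathbf{z})-h(\mathbf{y})\ge tK\sqrt{f(b)}$, i.e.\ $|J'|\ge t\sqrt{f(b)}$. Now the test vector $\alpha_i=|J|^{-1/2}\mathbf{1}_{i\in J}$ gives $\sum_{i:x_i\ne y_i}\alpha_i\ge |J'|/\sqrt{|J|}\ge t$, and you are done.

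Two smaller points. First, invoke the certificate at level $\ell=b$, not at $\ell=\lceil b\rceil$ or $\ell=h(\mathbf{x})$: the definition in the paper guarantees $h(\mathbf{y})\ge\ell$ for $\mathbf{y}$ agreeing on $J$ and gives $|J|\le f(\ell)$, and you need both $|J|\le f(b)$ and the conclusion $h(\mathbf{z})\ge b$. (If $b\notin\NN$ one rounds and absorbs the slack, but in every application in this paper $h$ is integer-valued and $b$ may be taken integral.) Second, you do not need $f$ to be monotone for this argument; that remark in your sketch is a red herring once the certificate is taken at the correct level.
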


Our strategy is to prove Theorem~\ref{thm:balls-into-bins-regular} by using Lemma~\ref{lem:Talagrand}, but to do this, we need to check two conditions, the Lipschitz and certificate conditions. For the random variable $I_k$, it is easy to check that $I_k$ is a $1$-Lipschitz function, but the problem arises in obtaining a sensible certificate function for $I_k$. To overcome this difficulty, we newly introduce a new random variable $I_{>k}$ which has a monotone property in some sense.

Let $X, Y,$ and $H$ be as in the statement of Theorem~\ref{thm:balls-into-bins-regular}, and write $X = \{x_1,\dots,x_n\}$.
For each $i \in [n]$, let $\Omega_i$ denote the probability space obtained by endowing the neighborhood $\Omega_i := N_H(x_i)$ with the uniform probability measure. We then define $\Omega := \prod_{i \in [n]} \Omega_i$ and equip $\Omega$ with the corresponding product measure. Note that each $\mathbf{y} \in \Omega$ corresponds to an output graph, denoted by $H'_{\mathbf{y}}$, of the $(X, Y; H)$-model with the same distribution. For an integer $m \geq 0$, we define the random variables $I_{> m}$, $I_m$ such that $I_{>m}$ ($I_m$) is the number of vertices of $Y$ that have degree larger than (exactly) $m$ in $H'_{\mathbf{y}}$. We remark that $I_m = I_{> m-1} - I_{> m}$ for each $m\geq 1$ and $I_0 = n - I_{ > 0}$.

As Lemma~\ref{lem:Talagrand} is a concentration inequality on the median, we need to estimate the median of $I_{>k}$. To do this, we will show that each $I_{m}$ is concentrated on the expectation for each $0 \leq k \leq m$ by using Chebyshev's inequality. We first obtain upper and lower bounds for the expectation.

\begin{lemma}\label{lem:expectation}
    Let $k, \delta, d_0, d, X, Y, H$ be the real numbers, vertex sets, and the bipartite graph as in the statement of Theorem~\ref{thm:balls-into-bins-regular}. Then for each $0 \leq m \leq k$, we have
    $$
        \left| \mathbb{E}[I_{m}] - \frac{e^{-1}}{m!}n \right| \leq 100(k+1)\delta \cdot \frac{e^{-1}}{m!}n.
    $$
\end{lemma}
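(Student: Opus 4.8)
The plan is to reduce, via linearity of expectation, to a Poisson approximation for the degree of a single vertex. Fix $y\in Y$ and write $N\defeq N_H(y)$, $\ell\defeq d_H(y)=|N|$ and $p_x\defeq 1/d_H(x)$ for $x\in N$. In the output graph $H'_{\mathbf y}$ each $x\in X$ is joined to $y$ independently with probability $1/d_H(x)$, so $\deg_{H'_{\mathbf y}}(y)$ is a sum of independent Bernoulli random variables with parameters $(p_x)_{x\in N}$, and hence
$$
    \Pr\bigl[\deg_{H'_{\mathbf y}}(y)=m\bigr]=\Bigl(\prod_{x\in N}(1-p_x)\Bigr)\,e_m\Bigl(\bigl(\tfrac{p_x}{1-p_x}\bigr)_{x\in N}\Bigr),
$$
where $e_m$ is the $m$-th elementary symmetric polynomial. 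Since $\mathbb{E}[I_m]=\sum_{y\in Y}\Pr[\deg_{H'_{\mathbf y}}(y)=m]$, $|Y|=n$, and the estimate below uses only the almost-regularity of $H$ (so it is uniform over $y$), the lemma reduces to a sufficiently sharp estimate of this single probability.

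Put $\lambda\defeq\sum_{x\in N}p_x$. Almost-regularity gives $p_x\in[\tfrac1{(1+\delta)d},\tfrac1{(1-\delta)d}]$ and $\ell\in[(1-\delta)d,(1+\delta)d]$, so $|\lambda-1|\le 3\delta$ and $\tfrac{p_x}{1-p_x}=p_x(1+O(1/d))$. The first factor is handled by $\prod_{x\in N}(1-p_x)=e^{-\lambda}\exp\bigl(-O(\sum_{x\in N}p_x^2)\bigr)=e^{-\lambda}(1+O(1/d))$, using $\sum_{x\in N}p_x^2=O(\ell/d^2)=O(1/d)$. For the symmetric-function factor the key point is that the fluctuations of the $p_x$ cancel at first order: writing $p_x=\tfrac{\lambda}{\ell}(1+\varepsilon_x)$ we have $\sum_{x\in N}\varepsilon_x=0$, and expanding $e_m\bigl((p_x)_{x\in N}\bigr)=\bigl(\tfrac{\lambda}{\ell}\bigr)^m\sum_{S\in\binom{N}{m}}\prod_{x\in S}(1+\varepsilon_x)$ the linear contribution $\sum_{S}\sum_{x\in S}\varepsilon_x=\binom{\ell-1}{m-1}\sum_{x\in N}\varepsilon_x$ vanishes, leaving $e_m\bigl((p_x)_{x\in N}\bigr)=\binom{\ell}{m}\bigl(\tfrac{\lambda}{\ell}\bigr)^m\bigl(1+O(m^2\delta^2)\bigr)=\tfrac{\lambda^m}{m!}\bigl(1+O(m^2/d)+O(m^2\delta^2)\bigr)$; replacing each $p_x$ by $\tfrac{p_x}{1-p_x}$ changes this by a further factor $1+O(m/d)$. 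Multiplying the two factors and then using that $|\lambda-1|\le 3\delta$ forces $e^{-\lambda}\lambda^m=e^{-1}\bigl(1+O(m\delta)\bigr)$, we arrive at
$$
    \Pr\bigl[\deg_{H'_{\mathbf y}}(y)=m\bigr]=\frac{e^{-1}}{m!}\Bigl(1+O(m\delta)+O(m^2/d)+O(m^2\delta^2)\Bigr).
$$

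It remains to bound the error by $100(k+1)\delta$. For $0\le m\le k$ the term $O(m\delta)$ is at most $O(k\delta)$ with an absolute implied constant; the term $O(m^2/d)=O(k^2/d)$ is forced below $\delta$ by choosing $d_0=d_0(k,\delta)$ large (the only place $d_0$ enters); and the term $O(m^2\delta^2)$ is dominated by $(k+1)\delta$, since $\delta\le 10^{-8}/(k+2)!^2$ makes $k\delta\le 10^{-8}$, whence $m^2\delta^2\le(k\delta)^2\le 10^{-8}(k+1)\delta$ (and it vanishes when $k=0$). Collecting constants gives $\bigl|\Pr[\deg_{H'_{\mathbf y}}(y)=m]-\tfrac{e^{-1}}{m!}\bigr|\le 100(k+1)\delta\cdot\tfrac{e^{-1}}{m!}$, and summing over the $n$ vertices $y\in Y$ proves the lemma.

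The only genuinely non-routine step is the symmetric-function estimate. A naive bound treating the $p_x$ as $O(\delta)$-perturbations of a common value loses a relative error of order $m^2\delta$ — linear in $\delta$ but carrying a factor $k^2$ — which would not fit inside $100(k+1)\delta$ for large $k$; exploiting the exact cancellation $\sum_{x\in N}\varepsilon_x=0$ upgrades this to $O(m^2\delta^2)$, and it is exactly this quadratic gain that the hypothesis $\delta\le 10^{-8}/(k+2)!^2$ is designed to exploit. I expect establishing this cancellation together with a clean second-order remainder bound to be the technical heart of the argument; the rest is Taylor expansion and tracking of absolute constants.
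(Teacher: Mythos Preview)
Your argument is correct, and the Poisson-binomial decomposition $\Pr[\deg=m]=\bigl(\prod_x(1-p_x)\bigr)\,e_m\bigl((p_x/(1-p_x))\bigr)$ together with the first-order cancellation $\sum_x\varepsilon_x=0$ is a clean way to organize it. The paper, however, takes a blunter route that shows your ``non-routine'' step to be unnecessary. It writes $p_y^{(m)}=\sum_{J\in\binom{[s]}{m}}\prod_{j\in J}\tfrac{1}{d_H(v_j)}\prod_{j'\notin J}\bigl(1-\tfrac{1}{d_H(v_{j'})}\bigr)$ and bounds every summand \emph{uniformly} by $(1\pm\delta)^{2m+10}\tfrac{(d-m)!\,e^{-1}}{d!}$, using only $(1-\delta)d\le d_H(v_j)\le(1+\delta)d$; combined with $\binom{s}{m}\tfrac{(d-m)!}{d!}=(1\pm\delta)^{2m}\tfrac{1}{m!}$ this already yields $p_y^{(m)}=(1\pm\delta)^{4m+10}\tfrac{e^{-1}}{m!}$, a relative error of order $(m+1)\delta$ that sits comfortably inside $100(k+1)\delta$.

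So your closing paragraph misdiagnoses the difficulty. The naive uniform bound does \emph{not} lose $m^2\delta$: treating each $p_x$ as a $(1\pm O(\delta))$-multiple of $1/d$ costs only $O(m\delta)$ in $e_m$, and that is exactly what the paper does. Your cancellation genuinely upgrades the symmetric-function error from $O(m\delta)$ to $O(m^2\delta^2)$, but the extra precision buys nothing for this lemma. Relatedly, the hypothesis $\delta\le 10^{-8}/(k+2)!^2$ is not designed for this step at all; it is invoked later, in the Talagrand argument, to ensure that $300(k+1)\sqrt{\delta}$ is small against $\sum_{i>k}\tfrac{e^{-1}}{i!}\sim\tfrac{e^{-1}}{(k+1)!}$. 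For the present lemma the paper needs only that $\delta$ is a small absolute constant.
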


\begin{proof}[Proof of Lemma~\ref{lem:expectation}]
    For each vertex $y\in Y$, denote $p^{(m)}_y$ by the probability that the degree of $y$ in the output graph $H'$ of the $(X, Y; H)$-model is exactly $m$. Then by the linearity of expectation, we have
    \begin{equation}\label{eq:deduct-py}
        \mathbb{E}[I_{m}] = \sum_{y\in Y} p^{(m)}_y.
    \end{equation}

    Let $v_1, \dots, v_s$ be the neighbors of $y$ in $H$. Then by the independence, the following holds.
    \begin{equation}\label{eq:py}
        p^{(m)}_y =  \sum_{J\subseteq [s], |J| = m} \prod_{j\in J}\frac{1}{d_H(v_j)} \prod_{j' \in [s]\setminus J} \frac{d_H(v_{j'}) - 1}{d_H(v_{j'})}.
    \end{equation}

    Since $H$ is $(1 \pm \delta)d$-regular, we have
    \allowdisplaybreaks
    \begin{align}
        \prod_{j\in J}\frac{1}{d_H(v_j)} \prod_{j' \in [s]\setminus J} \frac{d_H(v_{j'}) - 1}{d_H(v_{j'})} &\leq \frac{1}{(1 - \delta)^m d^m} \left(1 - \frac{1}{(1 + \delta)d}\right)^{(1-\delta)d-m}\nonumber \\
        &\leq \frac{(d - m)!}{(1 - \delta)^m d!}\left(1 - \frac{1}{(1 + \delta)d}\right)^{(1-2\delta)d}\nonumber\\
        &= \frac{(d - m)!}{(1 - \delta)^m d!}\left(1 - \frac{1}{(1 + \delta)d}\right)^{(1+\delta)d} \left(1 - \frac{1}{(1+\delta)d}\right)^{-3\delta d}\nonumber\\
        &\leq \frac{(d - m)!}{(1 - \delta)^m d!}\left(1 - \frac{1}{(1 + \delta)d}\right)^{(1+\delta)d} e^{\frac{3\delta}{1 + \delta}}\label{eq:1+xex}\\
        &\leq (1 + 6\delta)\frac{(d - m)!}{(1 - \delta)^m d!}\left(1 - \frac{1}{(1 + \delta)d}\right)^{(1+\delta)d}\label{eq:ex1+2x}\\
        &\leq (1 + 10\delta) \frac{(d - m)!e^{-1}}{(1 - \delta)^m d!}\label{eq:converge-to-e}\\
        &\leq (1 + 10\delta) (1 + 2\delta)^m \frac{(d - m)!e^{-1}}{d!}\nonumber \\
        &\leq (1 + \delta)^{2m+10} \frac{(d - m)!e^{-1}}{d!}.\nonumber
    \end{align}

    The inequality \eqref{eq:1+xex} holds due to the fact that $1 + x \leq e^x$. Also \eqref{eq:ex1+2x} holds because of the inequality $e^x \leq 1 + 2x$ holds whenever $x\in [0, 1/2]$. Finally, \eqref{eq:converge-to-e} follows from the limit $\lim_{x\to \infty} (1-x^{-1})^x = e^{-1}$ together with the assumption that $d$ is sufficiently large compared with $k$ and $\delta$. A matching lower bound can be obtained by analogous computations, and we omit the details. The following is what we have at this stage.

    \begin{equation}\label{eq:right-term}
        \prod_{j\in J}\frac{1}{d_H(v_j)} \prod_{j' \in [s]\setminus J} \frac{d_H(v_{j'}) - 1}{d_H(v_{j'})} = (1 \pm \delta)^{2m+10} \frac{(d - m)!e^{-1}}{d!},
    \end{equation}
    where $J\subseteq [s]$ and $|J| = m$.

    We now estimate the value $\binom{s}{m}\frac{(d-m)!}{d!}$. Since $(1 - \delta)d \leq s \leq (1 + \delta)d$, we have the follwoing.

    \allowdisplaybreaks
    \begin{equation*}
        \binom{s}{m}\frac{(d-m)!}{d!} = \frac{s(s-1)\cdots (s-m+1)}{d(d-1) \cdots (d-m+1)} \frac{1}{m!} \leq (1 + 2\delta)^m \frac{1}{m!} \leq (1 + \delta)^{2m}\frac{1}{m!}.
    \end{equation*}
    Similarly, we have
    \begin{equation*}
        \binom{s}{m}\frac{(d-m)!}{d!} = \frac{s(s-1)\cdots (s-m+1)}{d(d-1) \cdots (d-m+1)} \frac{1}{m!} \geq (1 - 3\delta/2)^m \frac{1}{m!} \geq (1 - \delta)^{2m}\frac{1}{m!}.
    \end{equation*}        

    Hence, we obtain
    \begin{equation}\label{eq:middle}
        \binom{s}{m}\frac{(d-m)!}{d!} \leq (1 \pm \delta)^{2m}\frac{1}{m!}.
    \end{equation}

    By combining \eqref{eq:py}, \eqref{eq:right-term}, \eqref{eq:middle}, we have

    \begin{equation*}
        \left| \mathbb{E}[I_{m}] - \frac{e^{-1}}{m!}n \right| \leq 100(k+1)\delta \cdot \frac{e^{-1}}{m!}n.
    \end{equation*}
    This completes the proof.
\end{proof}

The next lemma states that $I_m$ has a small variance.

\begin{lemma}\label{lem:variance}
    Let $k, \delta, d_0, d, X, Y, H$ be the real numbers, vertex sets, and the bipartite graph as in the statement of Theorem~\ref{thm:balls-into-bins-regular}. Then for each $0 \leq m \leq k$, we have
    $$
       \sqrt{\mathrm{Var}[I_m]} \leq 100\sqrt{(k+1)\delta} \cdot \frac{e^{-1}}{m!}n.
    $$
\end{lemma}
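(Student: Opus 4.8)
The plan is to bound $\mathrm{Var}[I_m] = \sum_{y \in Y} \mathrm{Var}[\mathbf{1}_{\deg_{H'}(y) = m}] + \sum_{y \neq y'} \mathrm{Cov}(\mathbf{1}_{\deg_{H'}(y)=m}, \mathbf{1}_{\deg_{H'}(y')=m})$ and show that the covariance sum is the dominant term that controls the variance — indeed the diagonal part is at most $\sum_y p_y^{(m)} = \mathbb{E}[I_m] = O(n)$, which already has the right order since $n \ll \left(\sqrt{(k+1)\delta}\,\frac{e^{-1}}{m!}n\right)^2$ once $d$ (hence effectively the allowed range) is large; but I should be careful, because $\sqrt{n}$ against $\sqrt{\delta}\, n$ is only fine if $\delta n \gtrsim 1$, so more honestly I expect to fold the diagonal contribution into the stated bound using that $\delta$ is bounded below in the regime of interest, or simply absorb $O(n)$ into the RHS (whose square is $\Theta(\delta n^2)$) treating small $n$ trivially. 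The real work is the off-diagonal covariances.

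For a pair $y \neq y'$, write $A_y$ for the event $\deg_{H'}(y) = m$. The covariance $\mathrm{Cov}(\mathbf{1}_{A_y}, \mathbf{1}_{A_{y'}}) = \Pr[A_y \cap A_{y'}] - \Pr[A_y]\Pr[A_{y'}]$ is governed entirely by the vertices $x_i \in X$ that are \emph{common} neighbors of both $y$ and $y'$: for an $x_i$ adjacent to neither, or to exactly one of $y, y'$, the corresponding coordinate $\mathbf{y}_i$ influences at most one of the two events, so conditioning on all non-common coordinates makes $A_y$ and $A_{y'}$ independent. Thus I would condition on $\mathbf{y}_i$ for all $x_i \notin N(y) \cap N(y')$, reducing to a bound on the conditional covariance coming from the at most $\deg_H(y) + \deg_H(y') \le 2(1+\delta)d$ shared coordinates — and in fact, in an almost regular graph a fixed pair $y, y'$ has only $O(d)$ common neighbors, and each common neighbor contributes a multiplicative perturbation of size $O(1/d)$ to the relevant conditional probabilities. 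Expanding $p_y^{(m)}$-type products as in the proof of Lemma~\ref{lem:expectation}, each common neighbor $x_i$ changes the conditional probability of $A_y$ by a factor $1 + O(1/d)$, so summing the resulting first-order terms over the common neighbors gives $|\mathrm{Cov}(\mathbf{1}_{A_y}, \mathbf{1}_{A_{y'}})| = O\!\left(\frac{|N(y) \cap N(y')|}{d^2}\right)$, uniformly, up to $(k+1)$-dependent constants.

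It then remains to sum this over all ordered pairs $(y, y')$. Here the key combinatorial input is that $\sum_{y \neq y'} |N(y) \cap N(y')| = \sum_{x \in X} \deg_H(x)(\deg_H(x) - 1) \le n \cdot (1+\delta)d \cdot ((1+\delta)d - 1) = O(n d^2)$, since each $x \in X$ lies in $\binom{\deg_H(x)}{2}$ such pairs. Plugging in, $\sum_{y \neq y'}|\mathrm{Cov}| = O\!\left(\frac{n d^2}{d^2}\right) = O(n)$ with the implicit constant polynomial in $k$ — which, just like the diagonal term, is $O(n)$ and not $O(\delta n^2)$. This suggests the bound as literally stated needs the $\delta$ to come from somewhere more refined: I expect that the honest computation tracks that the \emph{leading} $O(1/d^2)$ term in each covariance actually cancels against $\Pr[A_y]\Pr[A_{y'}]$ (the product measure already "sees" the typical overlap), leaving a genuinely smaller second-order residual of size $O(1/d^2)$ times a factor that is small unless the overlap structure is irregular, and that the variance picks up its $\delta$ precisely from the deviation of the degree sequence from exact regularity. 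Concretely I would expand both $\Pr[A_y \cap A_{y'}]$ and $\Pr[A_y]\Pr[A_{y'}]$ to second order in the $1/d_H(x_i)$ and show the difference is $O\!\big(\tfrac{1}{d^2} \cdot (\text{local irregularity})\big)$, then sum; the local irregularity terms sum to $O(\delta n d^2 \cdot \tfrac{1}{d^2}) = O(\delta n)$ against $\Theta(\delta n^2)$ on the RHS — comfortably enough room. This last bookkeeping — isolating the genuine $\delta$-dependence of the covariance rather than settling for the crude $O(1/d^2)$ bound, and confirming all $(k+1)$-factors stay within the stated constant — is the main obstacle; everything else parallels Lemma~\ref{lem:expectation} closely.
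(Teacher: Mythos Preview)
Your covariance-decomposition approach is genuinely different from the paper's. The paper bounds the second moment directly: it shows $p^{(m)}_{(y_1,y_2)} \defeq \Pr[A_{y_1} \cap A_{y_2}] \leq (1+100(k+1)\delta)(e^{-1}/m!)^2$ uniformly over pairs $y_1\neq y_2$ by expanding the product $q(W,W')$ much as in Lemma~\ref{lem:expectation}, sums to obtain $\mathbb{E}[I_m^2] \leq (1+100(k+1)\delta)(e^{-1}/m!\cdot n)^2$, and then subtracts the lower bound $\mathbb{E}[I_m]^2 \geq (1-100(k+1)\delta)^2(e^{-1}/m!\cdot n)^2$ coming from Lemma~\ref{lem:expectation}. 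The factor $\delta$ thus arises purely as the approximation error between each moment and its common Poisson target, with no analysis of individual covariances at all.

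Your route has a genuine gap in its final paragraph. You correctly arrive at $\sum_{y\neq y'} |\mathrm{Cov}(\mathbf{1}_{A_y},\mathbf{1}_{A_{y'}})| = O(n)$ (with a $k$-dependent constant), but then conjecture a further cancellation making each covariance proportional to some local irregularity, so that a $\delta$ emerges. This cancellation does not exist: already for $m=0$ in an exactly $d$-regular $H$ one computes
\[
\mathrm{Cov}(\mathbf{1}_{A_y},\mathbf{1}_{A_{y'}}) = (1-1/d)^{2(d-z)}(1-2/d)^{z} - (1-1/d)^{2d} \approx -\,z\,e^{-2}/d^{2},
\]
where $z = |N_H(y)\cap N_H(y')|$, and these sum to a genuinely nonzero $\Theta(n)$ with no irregularity factor. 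The correct fix is the one you already invoked for the diagonal term and then abandoned: the $O(n)$ off-diagonal sum is at most $10000(k+1)\delta\,(e^{-1}/m!)^{2} n^{2}$ as soon as $n \geq C(k)/\delta$, and this is guaranteed because $n \geq (1-\delta)d_0$ with $d_0 = d_0(k,\delta)$ chosen as large as we please. (The paper's own argument tacitly relies on the same absorption for the diagonal terms $p^{(m)}_{(y,y)} = p^{(m)}_y$, which its pairwise bound does not cover; indeed the lemma as literally stated fails at $\delta = 0$.) So your decomposition does lead to a proof --- just via the absorption you already identified, not via the refined cancellation you propose.
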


\begin{proof}[Proof of Lemma~\ref{lem:variance}]
    Since $\mathrm{Var}[I_m] = \mathbb{E}[I_m^2] - \mathbb{E}[I_m]^2$, we will estimate the value $\mathbb{E}[I_m^2]$. For a vertex pair $(y_1, y_2)\in Y^2$, denote $p^{(m)}_{(y_1, y_2)}$ by the probability that the degree of both $y_1$ and $y_2$ in the output graph $H'$ of the $(X, Y; H)$-model are exactly $m$. Then by the linearity of the expectation, we have
    
    \begin{equation}\label{eq:y1y2-linearlity}
        \mathbb{E}[I_m^2] = \sum_{(y_1, y_2)\in Y^2} p^{(m)}_{(y_1, y_2)}.
    \end{equation}

    For a given $(y_1, y_2)\in Y^2$, let $X_1 \defeq N_H(y_1) \setminus N_H(y_2)$, $X_2 \defeq N_H(y_2) \setminus N_H(y_1)$, and $Z\defeq N_H(y_1) \cap N_H(y_2)$.
    For a given vertex sets $W\subseteq X_1 \cup X_2$ and $W' \subseteq Z$, define $q(W, W')$ as 
    $$
        q(W, W') \defeq \prod_{v\in W\cup W'} \frac{1}{d_H(v)} \prod_{u\in (X_1 \cup X_2)\setminus{W}} \left(1 - \frac{1}{d_H(u)} \right) \prod_{w\in Z\setminus{W'}} \left(1 - \frac{2}{d_H(w)} \right).
    $$

    Then we have the following.
    
    \begin{equation}\label{eq:y1y2-prob}
        p^{(m)}_{(y_1, y_2)} = \sum_{a_1 + b_1 = m,\atop a_2 + b_2 = m} \sum_{V_1 \subseteq X_1, \atop |V_1| = a_1}\sum_{V_2 \subseteq X_2, \atop |V_2| = a_2}\sum_{U_1 \subseteq Z, \atop |U_1| = b_1}\sum_{U_2 \subseteq Z\setminus U_1, \atop |U_2| = b_2} q(V_1 \cup V_2, U_1 \cup U_2).
    \end{equation}

    To obtain a upper bound of $P^{(m)}_{(y, y')}$, we now bound the value $q(W, W')$ for all $W$ and $W'$ with $|W| + |W'| = 2m$. Since $H$ is $(1 \pm \delta)d$-regular graph, by the definition of $q(W, W')$, we have

    \allowdisplaybreaks
    \begin{align*}
        q(W, W') &\leq \left( \frac{1}{(1 - \delta)d} \right)^{|W| + |W'|} \prod_{u\in (X_1 \cup X_2)\setminus{W}} e^{-\frac{1}{d_H(u)}} \prod_{w\in Z\setminus{W'}} e^{-\frac{2}{d_H(w)}}\\
        &\leq \left( \frac{1}{(1 - \delta)d} \right)^{|W| + |W'|} \prod_{u\in (X_1 \cup X_2)\setminus{W}} e^{-\frac{1}{(1 + \delta)d}} \prod_{w\in Z\setminus{W'}} e^{-\frac{2}{(1 + \delta)d}}\\
        &\leq ((1 - \delta)d)^{-(|W| + |W'|)} e^{- \left(\frac{|X_1| + |X_2| - |W| + 2|Z| - 2|W'|}{(1 + \delta)d}\right)}\\ 
        &\leq ((1 - \delta)d)^{-(|W| + |W'|)} e^{\left(-\frac{2(1-\delta)}{(1+\delta)} + \frac{2(|W| + |W'|)}{d}\right)}\\
        &\leq ((1 - \delta)d)^{-(|W| + |W'|)} e^{-2 + 4\delta + \frac{2(|W| + |W'|)}{d}}\\
        &= (1 - \delta)^{-2m}d^{-2m}e^{-2 + 4\delta + 4m/d}\\
        &\leq (1 - \delta)^{-2m}\left(\frac{(d-m)!}{d!}\right)^2e^{-2 + 10\delta}.
    \end{align*}

    Hence, from \eqref{eq:y1y2-prob}, the following inequality holds.
    \allowdisplaybreaks
    \begin{align}
        p^{(m)}_{(y_1,y_2)} &\leq (1 - \delta)^{-2m}\left(\frac{(d-m)!}{d!}\right)^2e^{-2 + 10\delta} \sum_{a_1 + b_1 = m,\atop a_2 + b_2 = m} \sum_{V_1 \subseteq X_1, \atop |V_1| = a_1}\sum_{V_2 \subseteq X_2, \atop |V_2| = a_2}\sum_{U_1 \subseteq Z, \atop |U_1| = b_1}\sum_{U_2 \subseteq Z\setminus U_1, \atop |U_2| = b_2} 1 \nonumber\\
        &\leq (1 - \delta)^{-2m}\left(\frac{(d-m)!}{d!}\right)^2e^{-2 + 10\delta}\binom{d_H(y_1)}{m} \binom{d_H(y_2)}{m} \nonumber\\
        &\leq (1 - \delta)^{-2m} (1 + \delta)^{4m} e^{10\delta} \left( \frac{e^{-1}}{m!} \right)^2 \label{eq:middle-reuse}\\
        &\leq (1 + 100(k+1)\delta)\left( \frac{e^{-1}}{m!} \right)^2. \nonumber 
    \end{align}
    The inequality \eqref{eq:middle-reuse} holds by \eqref{eq:middle}.
    Now, from \eqref{eq:y1y2-linearlity}, we obtain the following.

    \begin{equation}\label{eq:second-moment}
        \mathbb{E}[I_m^2] \leq (1 + 100(k+1)\delta)\left( \frac{e^{-1}}{m!}n \right)^2.
    \end{equation}

    Since $\mathbb{E}[I_m]^2 \geq (1 - 100(k+1)\delta)^2 \left(\frac{e^{-1}}{m!}n \right)^2$ by Lemma~\ref{lem:expectation}, we obtain the following inequality from \eqref{eq:second-moment}.

    \allowdisplaybreaks
    \begin{align*}
        \mathrm{Var}[I_m] &= \mathbb{E}[I_m^2] - \mathbb{E}[I_m]^2\\
        &\leq \left((1 + 100(k+1)\delta) - (1 - 100(k+1)\delta)^2\right)\left(\frac{e^{-1}}{m!}n \right)^2\\
        &\leq 10000(k+1)\delta \left(\frac{e^{-1}}{m!}n \right)^2.
    \end{align*}
    This completes the proof.
\end{proof}

Up to this point, we have established bounds on the expectation and variance of the random variables $I_j$ for all $0 \le j \le k$. As a consequence, we can now estimate the median of the random variable $I_{>m}$ as follows.

\begin{lemma}\label{lem:median-naive-concentration}
    Let $k, \delta, d_0, d, X, Y, H$ be the real numbers, vertex sets, and the bipartite graph as in the statement of Theorem~\ref{thm:balls-into-bins-regular}. Then for each $0 \leq m \leq k$, we have
    $$
        \Pr\left[\left| I_{>m} - \sum_{i > m} \frac{e^{-1}}{i!}n\right| \leq 300(k+1)\sqrt{\delta}\cdot n  \right] \geq \frac{1}{2}.
    $$
\end{lemma}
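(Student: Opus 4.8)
The plan is to rewrite $I_{>m}$ as a signed sum of the variables $I_0,I_1,\dots,I_m$ and then control each summand via Chebyshev's inequality, using the first- and second-moment estimates from Lemmas~\ref{lem:expectation} and~\ref{lem:variance}, followed by a union bound over the at most $k+1$ indices involved. The starting observation is that every vertex of $Y$ has a well-defined degree in the output graph $H'_{\mathbf y}$, so $n = \sum_{j=0}^{m} I_j + I_{>m}$, while the Poisson normalization $\sum_{i\ge 0} e^{-1}/i! = 1$ gives $\sum_{i>m} \frac{e^{-1}}{i!} n = n - \sum_{j=0}^{m} \frac{e^{-1}}{j!} n$. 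Subtracting, one gets the exact identity
$$
    I_{>m} - \sum_{i>m} \frac{e^{-1}}{i!} n = -\sum_{j=0}^{m} \Bigl( I_j - \frac{e^{-1}}{j!} n \Bigr),
$$
so by the triangle inequality it suffices to bound $\sum_{j=0}^{m} \bigl| I_j - \frac{e^{-1}}{j!} n \bigr|$ on an event of probability at least $\tfrac12$.

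Next, I would apply Chebyshev's inequality to each $I_j$ with deviation parameter $\lambda \defeq \sqrt{2(k+1)}$. By Lemma~\ref{lem:variance}, $\Pr\bigl[\,|I_j - \mathbb{E}[I_j]| > \lambda\sqrt{\mathrm{Var}[I_j]}\,\bigr] \le \lambda^{-2} = \frac{1}{2(k+1)}$, and $\lambda\sqrt{\mathrm{Var}[I_j]} \le 100\sqrt{2}\,(k+1)\sqrt{\delta}\cdot\frac{e^{-1}}{j!} n$. A union bound over the $m+1 \le k+1$ values $j \in \{0,\dots,m\}$ then shows that with probability at least $\tfrac12$, the bound $|I_j - \mathbb{E}[I_j]| \le 100\sqrt{2}\,(k+1)\sqrt{\delta}\cdot\frac{e^{-1}}{j!} n$ holds for all $j \le m$ simultaneously. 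Combining this with the expectation estimate $\bigl|\mathbb{E}[I_j] - \frac{e^{-1}}{j!} n\bigr| \le 100(k+1)\delta\cdot\frac{e^{-1}}{j!} n$ of Lemma~\ref{lem:expectation}, the triangle inequality yields, on this event,
$$
    \Bigl| I_j - \frac{e^{-1}}{j!} n \Bigr| \le \bigl( 100\sqrt{2}\,(k+1)\sqrt{\delta} + 100(k+1)\delta \bigr) \frac{e^{-1}}{j!} n
$$
for every $j \le m$.

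Finally, I would sum this over $j = 0, \dots, m$, use $\sum_{j\ge 0} \frac{e^{-1}}{j!} = 1$ and $\delta \le \sqrt{\delta}$ (valid since $\delta < 1$), and conclude that on the same probability-$\ge\tfrac12$ event,
$$
    \Bigl| I_{>m} - \sum_{i>m} \frac{e^{-1}}{i!} n \Bigr| \le \bigl( 100\sqrt{2} + 100 \bigr)(k+1)\sqrt{\delta}\cdot n < 300(k+1)\sqrt{\delta}\cdot n,
$$
which is exactly the claimed bound. There is no substantive obstacle here; the only points that need a little care are the bookkeeping identity linking $I_{>m}$ to the partial sums of the $I_j$, and choosing the Chebyshev parameter $\lambda = \sqrt{2(k+1)}$ large enough that the union bound over the $k+1$ events still leaves probability at least $\tfrac12$, while small enough that the resulting constant stays below $300$.
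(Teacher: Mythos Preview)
Your proposal is correct and follows essentially the same approach as the paper's proof: both rewrite $I_{>m}$ via the identity $I_{>m} = n - \sum_{j=0}^{m} I_j$, apply Chebyshev's inequality to each $I_j$ using the variance bound of Lemma~\ref{lem:variance}, combine with the expectation estimate of Lemma~\ref{lem:expectation} via the triangle inequality, and take a union bound over the $m+1 \le k+1$ indices. The only cosmetic difference is that you make the Chebyshev parameter $\lambda = \sqrt{2(k+1)}$ explicit and track the sharper constant $100\sqrt{2}+100$, whereas the paper absorbs this directly into the round numbers $200$ and $300$.
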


\begin{proof}[Proof of Lemma~\ref{lem:median-naive-concentration}]
    By Chebyshev's inequality and Lemma~\ref{lem:variance}, for each $0 \leq j \leq m$, we have
    \begin{equation}\label{eq:chebyshev}
        \Pr\left[ \left| I_j - \mathbb{E}[I_j] \right| > 200(k+1)\sqrt{\delta} \cdot \frac{e^{-1}}{j!}n \right] \leq \frac{1}{2(k+1)}.
    \end{equation}
    Together with \eqref{eq:chebyshev}, Lemma~\ref{lem:expectation}, and the triangle inequality, the following holds.
    \begin{equation}\label{eq:chebyshev}
        \Pr\left[ \left| I_j - \frac{e^{-1}}{j!}n \right| > 300(k+1)\sqrt{\delta} \cdot \frac{e^{-1}}{j!}n \right] \leq \frac{1}{2(k+1)}.
    \end{equation}
    Thus, we obtain the following inequality by the union bound.
    \allowdisplaybreaks
    \begin{align*}
        &\Pr\left[ \left| \sum_{0\leq j \leq m} I_j - \sum_{0 \leq j \leq m} \frac{e^{-1}}{j!}n \right| > 300(k+1)\sqrt{\delta} \cdot n \right]\\
        &\leq \Pr\left[ \left| \sum_{0\leq j \leq m} I_j - \sum_{0 \leq j \leq m} \frac{e^{-1}}{j!}n \right| > 300(k+1)\sqrt{\delta} \cdot \left(\sum_{0\leq j \leq m} \frac{e^{-1}}{j!}n\right) \right]\\
        &\leq \sum_{0\leq j \leq m} \Pr\left[ \left| I_j - \frac{e^{-1}}{j!}n \right| > 300(k+1)\sqrt{\delta} \cdot \frac{e^{-1}}{j!}n \right]\\
        &\leq \frac{m+1}{2(k+1)}\\
        &\leq \frac{1}{2}.
    \end{align*}

    Since $I_{>m} = n - \sum_{0\leq j \leq m} I_j$ and $\sum_{i>m}\frac{e^{-1}}{i!}n = n - \sum_{0\leq j \leq m}\frac{e^{-1}}{j!}n$, we obtain the desired inequality
    \begin{equation*}
        \Pr\left[ \left| I_{>m} - \sum_{i > m} \frac{e^{-1}}{i!}n \right| \leq 300(k+1)\sqrt{\delta}\cdot n \right] \geq \frac{1}{2}.
    \end{equation*}
    This completes the proof.
\end{proof}

We are now ready to prove Theorem~\ref{thm:balls-into-bins-regular}.

\begin{proof}[Proof of Theorem~\ref{thm:balls-into-bins-regular}]
    Let $0 \leq m \leq k$.
    Our strategy is to apply Lemma~\ref{lem:Talagrand} on the random variable $I_{> m}$. To achieve this, we need to check the Lipschitz and certificate conditions of $I_{>m}$. Observe that if some vertex $x_i \in X$ changes its choice among the vertices of $N_H(x_i)$, then $I_{>m}$ differs by at most $1$. This means that $I_{>m}$ is a $1$-Lipschitz function. Now assume that $I_{>m}(\mathbf{y}) \geq \ell$. Let $Y'\subseteq Y$ be a vertex subset of $Y$ of size $\ell$ such that each vertex in $Y'$ has degree at least $m+1$ in the output graph $H'_{\mathbf{y}}$. For each $y'\in Y'$, there are $m+1$ vertices in $X$ such that they chose $y'$ in the random process on $(X, Y; H)$-model. Now consider the union of such $m+1$ vertices for all $y'\in Y'$, say $X'$. Then it means once the choices of $X'$ are fixed as the same as $\mathbf{y}$, then whatever the other choices were made, the value of $I_{>m}$ is at least $\ell$. Also, since $|Y'| = \ell$, and each $y' \in Y'$ contributes at most $m+1$ vertices in $X'$, we have $|X'|\leq (m+1)\ell$. This means that the function $f_m: x\to (m+1)x$ is the certificate of $I_{>m}$. Hence, by Lemma~\ref{lem:Talagrand}, we have
    
    \begin{align}
        \Pr[I_{>m} \leq b - t\sqrt{(m+1)b}] \Pr[I_{>m} \geq b] &\leq e^{-t^2/4} \label{eq:talagrand-lower}\\
        \Pr[I_{>m} \leq b'] \Pr[I_{>m} \geq b' + t'\sqrt{(m+1)b'}] &\leq e^{-t'^2/4}. \label{eq:talagrand-upper}
    \end{align}
    for any $b, b'$ and $t, t' \geq 0$.

    Choose $b = \sum_{i>m} \frac{e^{-1}}{i!}n - 300(k+1)\sqrt{\delta} \cdot n$ and $t = \frac{\gamma/2 - 300(k+1)\sqrt{\delta}}{\sqrt{(m+1)b}}n$. Note that since $\delta \leq \frac{10^{-8}}{(k+2)!^2}$, we have $b \geq \frac{1}{2}\sum_{i > m}\frac{e^{-1}}{i!}n$. Thus, by Lemma~\ref{lem:median-naive-concentration} and \eqref{eq:talagrand-lower}, the following holds.

    \allowdisplaybreaks
    \begin{align*}
        &\frac{1}{2}\Pr\left[I_{>m} \leq \sum_{i>m} \frac{e^{-1}}{i!}n - \frac{\gamma}{2} n\right]\\
        &= \frac{1}{2}\Pr[I_{>m} \leq b - (\gamma/2 - 300(k+1)\sqrt{\delta})n]\\
        &\leq \Pr[I_{>m} \leq b - (\gamma/2 -300(k+1)\sqrt{\delta})n]\Pr[I_{>m} \geq b]\\
        &\leq e^{- \frac{(\gamma/2 - 300(k+1)\sqrt{\delta})^2n^2}{4(m+1)b}}\\
        &\leq e^{-\frac{(\gamma - 600(k+1)\sqrt{\delta})^2}{16(k+1)}n}.
    \end{align*}
    Hence, we have
    \begin{equation}\label{eq:lower-I>m}
        \Pr\left[I_{>m} \leq \sum_{i>m} \frac{e^{-1}}{i!}n - \frac{\gamma}{2} n\right] \leq 2 e^{-\frac{(\gamma - 600(k+1)\sqrt{\delta})^2}{16(k+1)}n}. 
    \end{equation}

    Similarly, choose $b' = \sum_{i>m} \frac{e^{-1}}{i!}n + 300(k+1)\sqrt{\delta} \cdot n$ and $t' = \frac{\gamma/2 - 300(k+1)\sqrt{\delta}}{\sqrt{(m+1)b}}n$. Note that $b' \leq 2n$ by our choice of $\delta$. Then by Lemma~\ref{lem:median-naive-concentration} and \eqref{eq:talagrand-upper}, we have
    
    \allowdisplaybreaks
    \begin{align*}
        &\frac{1}{2}\Pr\left[I_{>m} \geq \sum_{i>m} \frac{e^{-1}}{i!}n + \frac{\gamma}{2} n\right]\\
        &= \frac{1}{2}\Pr[I_{>m} \leq b' + (\gamma/2 - 300(k+1)\sqrt{\delta})n]\\
        &\leq \Pr[I_{>m} \leq b'] \Pr[I_{>m} \geq b' + (\gamma/2 -300(k+1)\sqrt{\delta})n]\\
        &\leq e^{- \frac{(\gamma/2 - 300(k+1)\sqrt{\delta})^2n^2}{4(m+1)b}}\\
        &\leq e^{-\frac{(\gamma - 600(k+1)\sqrt{\delta})^2}{32(k+1)}n}.
    \end{align*}
    Hence, we have
    \begin{equation}\label{eq:upper-I>m}
        \Pr\left[I_{>m} \leq \sum_{i>m} \frac{e^{-1}}{i!}n - \frac{\gamma}{2} n\right] \leq 2 e^{-\frac{(\gamma - 600(k+1)\sqrt{\delta})^2}{32(k+1)}n}. 
    \end{equation}

    From \eqref{eq:lower-I>m} and \eqref{eq:upper-I>m} with the union bound, we obtain the following sharp concentration inequality of the random variable $I_{>m}$.    

    \begin{equation}\label{eq:concentration-I>m}
        \Pr\left[ \left| I_{> m} - \sum_{i > m} \frac{e^{-1}}{i!}n \right| > \frac{\gamma}{2} n \right] \leq 4e^{-\frac{(\gamma - 600(k+1)\sqrt{\delta})^2}{32(k+1)}n}. 
    \end{equation}

    We now consider the case $k = 0$. Then $I_0 = n - I_{>0}$. Then by \eqref{eq:concentration-I>m}, we have
    \begin{equation*}
        \Pr\left[ \left| I_0 - \frac{1}{e}n \right| > \gamma n \right] \leq \Pr\left[ \left| I_{> 0} - \sum_{i > 0} \frac{e^{-1}}{i!}n \right| > \frac{\gamma}{2} n \right] \leq 4e^{-\frac{(\gamma - 600(k+1)\sqrt{\delta})^2}{32(k+1)}n}, 
    \end{equation*}
    which satisfies the inequality \eqref{eq:exponential-decay}.
    Thus, we may assume $k > 0$. Since $I_k = I_{> k-1} - I_{k}$, by \eqref{eq:concentration-I>m} with the union bound, the following holds.
    \allowdisplaybreaks
    \begin{align*}
        &\Pr\left[ \left| I_k - \frac{e^{-1}}{k!}n \right| > \gamma n \right]\\
        &\leq \Pr\left[ \left| I_{> k-1} - \sum_{i > k-1} \frac{e^{-1}}{i!}n \right| > \frac{\gamma}{2}n \right] + \Pr\left[ \left| I_{> k} - \sum_{i > k} \frac{e^{-1}}{i!}n \right| > \frac{\gamma}{2}n \right]\\
        &\leq 10e^{-\frac{(\gamma - 600(k+1)\sqrt{\delta})^2}{32(k+1)}n}.
    \end{align*}
    This completes the proof.
\end{proof}


\section{Proof of Theorem~\ref{thm:main-anticoncentration}}\label{sec:proof}

In this section, we prove Theorem~\ref{thm:main-anticoncentration} by showing the following. For a given $n$-vertex graph $G$ and an $n$-vertex tree $T$, denote by $N(T; G)$ the number of labeled copies of $T$ in $G$. Note that the ratio between the number of isomorphic copies and the labeled copies of $T$ in $G$ is exactly the number of automorphisms of $T$.

\begin{theorem}\label{thm:counting}
    There exists a real number $d'_0 > 0$ such that for all $d \geq d'_0$, the following holds. Let $G$ be an $n$-vertex $(1 \pm 10^{-10})d$-regular graph and $T$ be an $n$-vertex tree. Then we have
    $$
        N(T; G) \leq e^{-n/1000}d^n.
    $$
\end{theorem}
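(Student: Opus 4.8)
The strategy is to bound the number of edge-preserving injections $\phi\colon V(T)\hookrightarrow V(G)$ — which is exactly $N(T;G)$ — and to distinguish two regimes according to the number $\lambda_0$ of leaves of $T$, with a threshold of order $n/100$. In both regimes fix a root $r$ of $T$ and a breadth-first order $v_1=r,v_2,\dots,v_n$ of $V(T)$, so that an embedding is produced by choosing $\phi(v_1)$ ($\le n$ options) and then, for $i\ge 2$, choosing $\phi(v_i)\in N_G(\phi(p(v_i)))\setminus\{\phi(v_1),\dots,\phi(v_{i-1})\}$, where $p(v_i)$ is the parent of $v_i$ and we write $\deg^+(u)$ for the number of children of $u$.

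\textbf{Many leaves.} Suppose $\lambda_0\ge n/100$, and let $L_1$ be the set of leaves of $T$. Any embedding $\phi$ of $T$ restricts to an embedding $\phi'$ of $T-L_1$ occupying a set $S$ of exactly $n-\lambda_0$ vertices of $G$, and the number of ways to extend $\phi'$ to an embedding of $T$ equals the number of perfect matchings of the balanced bipartite graph on parts $L_1$ and $V(G)\setminus S$ (both of size $\lambda_0$) that joins $\ell\in L_1$ to $g$ whenever $g\in N_G(\phi'(p(\ell)))$. Since every $\ell$ has at most $(1+\delta)d$ neighbours there, the Br\'egman--Minc inequality bounds this count by $\bigl(((1+\delta)d)!\bigr)^{\lambda_0/((1+\delta)d)}\le\bigl((1+\delta)d/e\bigr)^{\lambda_0}(1+o_d(1))^{\lambda_0}$. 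Combined with the trivial estimate $N(T-L_1;G)\le n\bigl((1+\delta)d\bigr)^{n-1-\lambda_0}$ from the BFS process, this yields $N(T;G)\le n\bigl((1+\delta)d\bigr)^{n-1}\bigl(e^{-1}(1+o_d(1))\bigr)^{\lambda_0}$, which is at most $e^{-n/1000}d^n$ for $d$ large since $\lambda_0\ge n/100$ and $\delta=10^{-10}$.

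\textbf{Few leaves.} Suppose $\lambda_0<n/100$. Since $\sum_v\deg_T(v)=2n-2$ we have $\sum_v(\deg_T(v)-2)=-2$, from which fewer than $2\lambda_0<n/50$ vertices of $T$ have degree different from $2$; thus $T$ consists of a few long bare paths attached at a few branch vertices. For such $T$ the injectivity constraint is costly at a typical BFS step: writing $Z_i:=\bigl|N_G(\phi(p(v_i)))\cap\{\phi(v_1),\dots,\phi(v_{i-1})\}\bigr|$ for the number of forbidden choices at step $i$, a double counting over $E(G)$ shows that for \emph{every} embedding $\phi$, $\sum_{i\ge 2}Z_i\ge |E(G)|-\lambda_0(1+\delta)d\ge\bigl(\tfrac12-o(1)\bigr)dn$ — each edge $ab$ of $G$ with preimages $\{x,y\}=\phi^{-1}(\{a,b\})$ contributes at least $1$ to $\sum_{i\ge2}Z_i$ unless $x$ or $y$ is a leaf. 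On the other hand $\sum_{i\ge 2}\deg_G(\phi(p(v_i)))=\sum_{u}\deg^+(u)\deg_G(\phi(u))\le (1+\delta)d(n-1)$, so the $n-1$ positive integers $\deg_G(\phi(p(v_i)))-Z_i$ have average at most $0.53 d$; concavity of $\log$ then gives, for a uniformly random embedding $\phi$, $N(T;G)=e^{H(\phi)}\le n(0.53 d)^{n-1}\le e^{-n/1000}d^n$ for $d$ large. This ``few leaves'' bound is the combinatorial shadow of Theorem~\ref{thm:balls-into-bins-regular} with $k=0$: if each vertex of $G$ picked a uniformly random neighbour, about $e^{-1}n$ vertices would receive no preimage, i.e.\ a ``typical'' rooted spanning tree has $\approx e^{-1}n$ leaves, so having fewer than $n/100$ leaves is exponentially atypical.

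\textbf{What is delicate.} The serious work is in the few-leaves case. The balls--into--bins heuristic above does not literally apply: $G$ need not be bipartite, and — more fundamentally — the BFS embedding process is not a product measure, since the location of a vertex is constrained by the locations of its ancestors, so Theorem~\ref{thm:balls-into-bins-regular} cannot be invoked directly on the embedding itself. One therefore either pushes through the self-avoiding-walk / edge-counting estimate sketched above (the crux being that the long degree-$2$ paths of $T$ force the embedding to consume a constant fraction of $E(G)$, so a uniformly random embedding has conditional entropy noticeably below $\log d$ at each step), or else builds an auxiliary almost-regular bipartite graph on which the relevant choices \emph{do} form a product measure, applies Theorem~\ref{thm:balls-into-bins-regular} there, and pays for the discrepancy — in particular controlling automorphism counts, which can be large for path-like trees. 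By contrast the many-leaves case is comparatively routine once the extension step is recognized as a perfect-matching count and Br\'egman's inequality is applied.
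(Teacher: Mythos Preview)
Your many-leaves argument is the paper's: strip the leaves, bound partial embeddings greedily by $n((1+\delta)d)^{n-1-\lambda_0}$, recognise the extension step as a perfect-matching count, and apply Br\'egman--Minc. (The paper uses threshold $n/10$ rather than $n/100$, but this is immaterial.)

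For the few-leaves case you take a genuinely different route, and it is correct. Your pointwise bound $\sum_{i\ge 2}Z_i\ge |E(G)|-\lambda_0(1+\delta)d$ holds: the contribution of an edge $ab\in E(G)$ with preimages $x,y$ (say $y$ later in the BFS order) to $\sum Z_i$ equals $\deg^+_T(y)$ plus the number of children of $x$ placed after $y$; since $y$ is not the root, $\deg^+_T(y)\ge 1$ whenever $y$ is not a leaf. Feeding this into $\sum_i C_i=\sum_i d_G(\phi(p(v_i)))-\sum_i Z_i\le 0.52\,dn$ and then into the entropy chain $\log N(T;G)=H(\phi)\le\log n+\mathbb{E}\bigl[\sum_{i\ge 2}\log C_i\bigr]\le\log n+(n-1)\log(0.53d)$ gives the bound. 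This is self-contained and uses neither Talagrand nor the $(X,Y;H)$-model.

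The paper's few-leaves argument is different and, in a sense, cruder. It replaces each edge of $G$ by a digon to form $D$, picks a non-leaf edge $ab$ of $T$ and orients $T$ so that \emph{every} vertex has out-degree exactly one (with a digon on $ab$), and then observes that any labelled copy of this orientation in $D$ is a spanning sub-digraph with all out-degrees one and at most $\lambda_0$ vertices of in-degree zero. Such sub-digraphs are in bijection with outputs of the $(X,Y;H)$-model on the bipartite double cover of $G$, each output has probability at least $((1+\delta)d)^{-n}$, and Theorem~\ref{thm:balls-into-bins-regular} makes $\Pr[I_0\le n/10]$ exponentially small. So the paper never analyses the embedding process at all---it simply overcounts by allowing arbitrary functional digraphs (including those with cycles)---and the non-product structure you worry about never arises.

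Your hedging in the ``what is delicate'' paragraph is therefore unnecessary: your entropy/edge-counting argument is complete as stated, and your ``or else'' sketch (paying for discrepancies, controlling automorphisms of path-like trees) does not describe what the paper does. The trade-off is that your approach is more elementary and gives a better constant, while the paper's approach showcases the $(X,Y;H)$-model it develops in Section~\ref{sec:balls-into-bins}.
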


Keep Theorem~\ref{thm:counting} in our mind, we can prove Theorem~\ref{thm:main-anticoncentration}.

\begin{proof}[Proof of Theorem~\ref{thm:main-anticoncentration}]
    Let $d'_0$ be the same real number as in the statement of Theorem~\ref{thm:counting}.
    By Theorem~\ref{thm:Kostochka}, there exists a constant $d''_0 > 0$ such that every connected $n$-vertex $(1\pm 10^{-10})d$-regular graph $G$ satisfies the inequality
    \begin{equation}\label{eq:tree-lowerbound}
        (1 - 2\cdot 10^{-10})^n d^n \leq |T(G)|
    \end{equation}
    whenever $d \geq d''_0$.
    Let $d_0 \defeq \max\{d'_0, d''_0\}$. Fix an integer $d \geq d_0$ and consider a connected $n$-vertex $(1 \pm 10^{-10})d$-regular graph $G$. Let $T$ be an arbitrary $n$-vertex tree. Let $\calT$ be a random spanning tree of $G$ that is chosen uniformly at random from the set $T(G)$. Then by Theorem~\ref{thm:counting} and \eqref{eq:tree-lowerbound}, we have
    $$
        \Pr[\mathcal{T} \simeq_{\mathrm{iso}} T] \leq \frac{e^{-n/1000}d^n}{|T(G)|} \leq \frac{e^{-n/1000}d^n}{(1 - 2\cdot 10^{-10})^n d^n} \leq e^{-n/2000}.
    $$
    This completes the proof.
\end{proof}

Hence, the only remaining task is to prove Theorem~\ref{thm:counting}. To prove Theorem~\ref{thm:counting}, we need Br\'{e}gman--Minc inequality~\cite{Bregman}. For a given graph $G$, we denote $\mathrm{pm}(G)$ as the number of perfect matchings in $G$.

\begin{lemma}[Br\'{e}gman--Minc inequality]\label{lem:Bregman-Minc}
    Let $H$ be a bipartite graph on the vertex partition $V(H) = A \cup B$ with $|A| = |B|$. Then we have
    \begin{equation*}
        \mathrm{pm}(H) \leq \prod_{v\in A} (d_H(v)!)^{1/d_H(v)}.
    \end{equation*}
\end{lemma}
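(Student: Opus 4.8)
The plan is to prove the Br\'{e}gman--Minc inequality by the entropy method (Radhakrishnan's argument). We may assume $\mathrm{pm}(H) > 0$, since otherwise the inequality is trivial: a perfect matching can exist only if every vertex has degree at least $1$, in which case each factor $(d_H(v)!)^{1/d_H(v)}$ on the right-hand side is at least $1$. Let $\calM$ be the set of perfect matchings of $H$ and let $M$ be chosen uniformly at random from $\calM$, so that $\mathbb{H}(M) = \log|\calM| = \log\mathrm{pm}(H)$, where $\mathbb{H}(\cdot)$ denotes Shannon entropy. Identify $M$ with the tuple $(M(v))_{v\in A}$, where $M(v)\in B$ is the partner of $v$. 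The goal is to show $\mathbb{H}(M) \le \sum_{v\in A}\frac{1}{d_H(v)}\log(d_H(v)!)$; exponentiating then gives the claim.

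First I would introduce an independent uniformly random linear order $\sigma$ on $A$. Using that $\sigma$ and $M$ are independent (so $\mathbb{H}(M)=\mathbb{H}(M\mid\sigma)$) together with the chain rule for entropy, expanded in the order dictated by $\sigma$, and then averaging over $\sigma$,
\[
    \mathbb{H}(M) \;=\; \mathbb{E}_\sigma\sum_{v\in A}\mathbb{H}\bigl(M(v)\mid (M(u))_{u<_\sigma v},\,\sigma\bigr) \;=\; \sum_{v\in A}\mathbb{E}_\sigma\,\mathbb{H}\bigl(M(v)\mid (M(u))_{u<_\sigma v},\,\sigma\bigr).
\]
For fixed $v$, set $S_v \defeq N_H(v)\setminus\{M(u): u<_\sigma v\}$, the set of neighbours of $v$ still unmatched when $v$ is processed. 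Since $S_v$ is a function of the conditioning and $M(v)\in S_v$ always, the elementary bound $\mathbb{H}(Z\mid\calF)\le\mathbb{E}[\log|S|]$ for $\calF$-measurable $S$ with $Z\in S$ a.s.\ gives $\mathbb{H}(M)\le\sum_{v\in A}\mathbb{E}_{\sigma,M}[\log|S_v|]$.

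The heart of the argument is the symmetry claim: for every fixed perfect matching $M$ and every $v\in A$ with $d\defeq d_H(v)$, the random variable $|S_v|$ is uniformly distributed on $\{1,2,\dots,d\}$ as $\sigma$ ranges uniformly over linear orders of $A$. Indeed, writing $N_H(v)=\{w_1,\dots,w_d\}$ and $a_i\defeq M^{-1}(w_i)$, the $a_i$ are distinct and one of them is $v$ itself; moreover $|S_v| = d - \#\{i: a_i <_\sigma v\}$, and the restriction of $\sigma$ to $\{a_1,\dots,a_d\}$ is a uniform random linear order, so the number of these vertices preceding $v$ is uniform on $\{0,1,\dots,d-1\}$. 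Hence $\mathbb{E}_\sigma[\log|S_v|] = \frac1d\sum_{j=1}^d\log j = \frac1d\log(d!)$, independently of $M$, and substituting back yields $\log\mathrm{pm}(H)=\mathbb{H}(M)\le\sum_{v\in A}\frac{1}{d_H(v)}\log(d_H(v)!)$.

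I expect the routine-but-delicate point to be the interchange of the chain rule with the averaging over $\sigma$ (one must expand $\mathbb{H}(M\mid\sigma)$ in the $\sigma$-order \emph{before} taking $\mathbb{E}_\sigma$, which relies on the independence $\sigma\perp M$), together with the careful verification that $S_v$ depends only on the conditioning $\bigl(\sigma,(M(u))_{u<_\sigma v}\bigr)$ so that the bound $\mathbb{H}(M(v)\mid\cdot)\le\mathbb{E}[\log|S_v|]$ applies. The genuinely substantive ingredient is the uniform-distribution claim for $|S_v|$, but this is a short symmetry observation rather than a real obstacle; the remainder is bookkeeping with elementary properties of entropy.
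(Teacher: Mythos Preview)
The paper does not prove this lemma; it is quoted as the classical Br\'{e}gman--Minc inequality (with the reference to~\cite{Bregman}) and invoked as a black box in the proof of Theorem~\ref{thm:counting}. Your argument is Radhakrishnan's entropy proof, and it is correct: the chain-rule expansion of $\mathbb{H}(M\mid\sigma)$ in the $\sigma$-order, the bound $\mathbb{H}(M(v)\mid\cdot)\le\mathbb{E}[\log|S_v|]$ via measurability of $S_v$ with respect to the conditioning, and the symmetry claim that $|S_v|$ is uniform on $\{1,\dots,d_H(v)\}$ over a uniformly random $\sigma$ (for each fixed perfect matching $M$) are all handled accurately. Since the paper offers no proof of its own, there is nothing further to compare.
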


We now prove Theorem~\ref{thm:counting}.

\begin{proof}[Proof of Theorem~\ref{thm:counting}]
    Let $d_1$ denote the real number corresponding to $d_0$ in the statement of Theorem~\ref{thm:balls-into-bins-regular} with parameters $k = 0$ and $\delta = 10^{-10}$.
    Furthermore, let $d_2$ be the smallest real number such that, for all $d \geq d_2$,
    \begin{equation}\label{eq:stirling}
        \left((1+10^{-10})d\right)^{1/((1+10^{-10})d)} \leq (1+10^{-9})\frac{d}{e}.
    \end{equation}
    Note that $d_2$ exists from the fact that $x \to x^x$ is a monotonically increasing function in the interval $x\in [1, \infty)$ and also by Stirling's approximation. Set $d'_0 = \max\{d_1, d_2\}$, fix an $n$-vertex $(1 \pm 10^{-10})d$-regular graph $G$ and $n$-vertex tree $T$, where $d \geq d_0$. Let $D$ be a digraph which is obtained from $G$ by replacing every edge of $G$ by a digon.

    \begin{claim}\label{clm:few-leaves}
        If $T$ has less than $\frac{n}{10}$ leaves, then $N(T; G) \leq e^{-n/1000}d^n$.
    \end{claim}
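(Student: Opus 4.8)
The plan is to bound $N(T;G)$, the number of labeled copies of $T$ in $G$, by encoding each copy as a partial injection of $V(G)$ (a ``partial permutation'') together with a small number of extra edges, and then to estimate the number of such partial injections via the Br\'egman--Minc inequality (Lemma~\ref{lem:Bregman-Minc}). The point is that the naive bound, obtained by embedding $T$ vertex by vertex in a rooted order, is $N(T;G)\le n\,\bigl((1+10^{-10})d\bigr)^{n-1}$, losing only a factor $d$ per vertex, whereas Br\'egman--Minc replaces the factor $d$ by $(d!)^{1/d}=(1+o_d(1))\,d/e$ for almost every vertex; a factor $e^{-1}$ per vertex is far more than the claimed $e^{-n/1000}$.

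In more detail, I would proceed as follows. Write $L$ for the number of leaves of $T$; since every tree has at least two leaves and $L<n/10$, we may assume $n\ge 21$. Root $T$ at a leaf $r$ and, for every vertex $u$ having at least one child, fix its \emph{first child} $c(u)$ with respect to some ordering. The set $A$ of vertices of $T$ with a child then has $|A|=n-L+1$, the map $c$ is injective with $r\notin c(A)$, and exactly $L-2$ vertices of $T$ fail to be a first child. Given a labeled copy $\phi\colon V(T)\hookrightarrow V(G)$ of $T$, form $g:=\phi\circ c\circ\phi^{-1}$; this is a partial injection of $V(G)$ with domain $\phi(A)$ of size $n-L+1$ and with $g(v)\in N_G(v)$ for every $v$ in its domain, since $\{u,c(u)\}\in E(T)$ forces $\{\phi(u),\phi(c(u))\}\in E(G)$. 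Processing $V(T)$ in a breadth-first order from $r$ and using $g$ to propagate $\phi$ along first-child chains, one checks that $\phi$ is uniquely recovered from the triple $\bigl(\phi(r),\,g,\,(\phi(u))_{u\text{ not a first child}}\bigr)$: each vertex of $T$ is either $r$, or a first child $c(w)$ of an already-processed $w$ (so $\phi(c(w))=g(\phi(w))$), or one of the $L-2$ recorded vertices. Hence
\[
 N(T;G)\ \le\ n\cdot P_{\,n-L+1}\cdot\bigl((1+10^{-10})d\bigr)^{L-2},
\]
where $P_m$ denotes the number of partial injections $g$ of $V(G)$ with $|\mathrm{dom}(g)|=m$ and $g(v)\in N_G(v)$ throughout, and the last factor accounts for the at most $(1+10^{-10})d$ choices of $\phi(u)\in N_G(\phi(\pi(u)))$ available for each non-first-child vertex $u$ in turn.

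The remaining step is to estimate $P_m$ for $m=n-L+1$. Summing over the $\binom{n}{m}$ choices of the domain $S$, $P_m$ is the total over all such $S$ of the number of matchings saturating $S$ in the bipartite graph with parts $S$ and $V(G)$ and edges inherited from $G$; padding $S$ with $n-m=L-1$ dummy vertices joined to all of $V(G)$ balances the two sides, so Lemma~\ref{lem:Bregman-Minc} bounds this by $\frac{1}{(L-1)!}\bigl((n!)^{1/n}\bigr)^{L-1}\prod_{v\in S}(d_G(v)!)^{1/d_G(v)}$. Invoking \eqref{eq:stirling} to get $(d_G(v)!)^{1/d_G(v)}\le(1+10^{-8})\tfrac de$ for all $v$ once $d$ is large, together with the crude bounds $(n!)^{1/n}\le n$, $\binom nm\le n^{L-1}/(L-1)!$ and $(L-1)!\ge((L-1)/e)^{L-1}$, and using $L-1<n/10$, yields $P_{\,n-L+1}\le(10e)^{n/5}\bigl((1+10^{-8})\tfrac de\bigr)^{n-L+1}$. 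Substituting into the displayed inequality, the powers of $d$ telescope to $d^{-1}$, and a direct computation shows that the remaining scalar is at most $\tfrac nd\,e^{0.66n}(1.01/e)^{0.9n}(1+10^{-10})^{n/10}$, which is below $e^{-n/1000}$ for every $n\ge 21$, with substantial room to spare. I expect the only genuinely delicate point to be the encoding in the second paragraph --- verifying that the partial injection $g$ together with the $L-2$ recorded images really does pin down $\phi$ --- since once that is in place the Br\'egman--Minc estimate supplies the entire exponential gain and everything else is routine Stirling bookkeeping.
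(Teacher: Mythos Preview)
Your proof is correct, and it takes a genuinely different route from the paper's.

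The paper proves this claim via the balls--into--bins model of Section~\ref{sec:balls-into-bins}: it orients $T$ so that every vertex has out-degree $1$, observes that a copy of $T$ in $G$ then corresponds to an out-degree-$1$ spanning sub-digraph of the digon-digraph $D$ with at most $n/10$ vertices of in-degree $0$, and finally invokes Theorem~\ref{thm:balls-into-bins-regular} (the Talagrand-based concentration for $I_0$) to show that such sub-digraphs form an exponentially small fraction of all out-degree-$1$ sub-digraphs. In other words, the whole of Section~\ref{sec:balls-into-bins} is deployed here.

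Your argument bypasses that machinery entirely. By rooting $T$ at a leaf and recording the ``first-child'' map, you encode each embedding by a partial injection of $V(G)$ of size $n-L+1$ together with $O(L)$ extra choices, and then bound the number of partial injections with Br\'egman--Minc after padding with dummies. The factor $(d!)^{1/d}\approx d/e$ per vertex in the injection yields the exponential saving. This is more self-contained: Br\'egman--Minc is the only external tool, and it is the same tool the paper already uses for Claim~\ref{clm:many-leaves}. What the paper's approach buys instead is the $(X,Y;H)$-model and its concentration statement, which the authors present as being of independent interest; your approach trades that generality for a shorter, more uniform proof of the theorem at hand. The encoding step you flag as ``the only genuinely delicate point'' is indeed fine: processing in BFS order, $\phi$ at a first child is determined by $g$ and $\phi$ at the parent, while $\phi$ at each of the $L-2$ non-first-children is recorded explicitly, so the triple $(\phi(r),g,(\phi(u))_{u\in U})$ recovers $\phi$; for a fixed $g$ the number of compatible triples is then at most $n\cdot((1+10^{-10})d)^{L-2}$, giving your displayed inequality.
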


    \begin{claimproof}[Proof of Claim~\ref{clm:few-leaves}]
        Let $X$ and $Y$ be two disjoint copies of $V(G)$. We construct a bipartite graph $H$ on the vertex partition $X \cup Y$ such that $(u,v)\in X\times Y$ is an edge of $H$ if and only if $uv\in E(G)$. Observe that $H$ is also $(1 \pm 10^{-10})d$-regular. Consider a $(X, Y; H)$-model and its output graph $H'$. Based on $H'$, we define $D_{H'}$ to be the sub-digraph of $D$ in which every vertex $u \in V(G)$ has a unique out-neighbor, namely the vertex $v \in V(G)$ such that $(u,v) \in H'$. Note that the mapping $H' \to D_{H'}$ is a bijection between the set of all output graphs of $(X, Y; H)$-model and the set of all spanning sub-digraphs of $D$ with all the vertices having out-degree exactly $1$. Since $H'$ is $(1\pm 10^{-10})d$-regular, the size of such sets are at most $(1 + 10^{-10})^n d^n$.

        Now we fix an arbitrary edge $ab\in E(T)$ in $T$, which is not a leaf edge, and replace it with a digon. Then we replace the edges of subtrees of $T$ rooted at $a$ and rooted at $b$ by a directed edge with direction toward the root, respectively. We denote by $\overrightarrow{T_{ab}}$ the directed tree obtained from this, whose underlying graph is isomorphic to $T$. Note that every vertex of $\overrightarrow{T_{ab}}$ has out-degree exactly one in $\overrightarrow{T}_{ab}$ and each leaf vertex of $T$ has in-degree zero in $\overrightarrow{T_{ab}}$. Since $T$ has at most $\frac{n}{10}$ leaves, the number of labeled copies of $T$ in $G$ is bounded above by the number of spanning sub-digraphs of $D_H$ such that all the vertices have out-degree exactly one and contains at most $\frac{n}{10}$ vertices that have in-degree zero. Thus, by Theorem~\ref{thm:balls-into-bins-regular}, we have
        \begin{equation*}
            N(T;G) \leq \Pr\left[I_0 \leq \frac{n}{10} \right] (1 + 10^{-10})^n d^n \leq e^{-n/1000}d^n.
        \end{equation*}
        This completes the proof.
    \end{claimproof}

    We now consider the case that $T$ has many leaves.

    \begin{claim}\label{clm:many-leaves}
        If $T$ has at least $\frac{n}{10}$ leaves, then $N(T; G) \leq e^{-n/1000}d^n$.
    \end{claim}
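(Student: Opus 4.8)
The plan is to treat the many-leaves case by deleting all leaves of $T$ at once, embedding the resulting ``core'' tree by a crude greedy count, and then bounding the number of ways to re-attach the leaves via the Br\'egman--Minc inequality. Write $L$ for the leaf set of $T$, so $|L|\ge n/10$, and put $T':=T-L$. Since $n\ge (1-10^{-10})d$ and $d\ge d'_0$, we may assume $n$ is larger than any prescribed absolute constant; in particular $T$ is not an edge, so $T'$ is a tree on $n-|L|$ vertices and the unique $T$-neighbour $p(\ell)$ of each leaf $\ell$ lies in $V(T')$. The crucial observation is that, after fixing an embedding $\psi\colon T'\hookrightarrow G$, the extensions of $\psi$ to a copy of $T$ are exactly the choices of an image in $N_G(\psi(p(\ell)))$ for each $\ell\in L$ that are pairwise distinct and avoid $\psi(V(T'))$; since $|V(T')|+|L|=n=|V(G)|$, these are in bijection with the perfect matchings of the bipartite graph $B_\psi$ with parts $L$ and $V(G)\setminus\psi(V(T'))$ (both of size $|L|$) in which $\ell v$ is an edge precisely when $v\in N_G(\psi(p(\ell)))$. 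Consequently $N(T;G)\le N(T';G)\cdot\max_\psi\mathrm{pm}(B_\psi)$.

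For the first factor I would root $T'$ at an arbitrary vertex and embed its vertices in breadth-first order: the root has $n$ possible images and every later vertex must land in the neighbourhood of its already-placed parent, giving $N(T';G)\le n\,\Delta(G)^{\,n-|L|}\le n\bigl((1+10^{-10})d\bigr)^{n-|L|}$. For the second factor, observe that each leaf has $d_{B_\psi}(\ell)\le d_G(\psi(p(\ell)))\le(1+10^{-10})d$ (and if some $d_{B_\psi}(\ell)=0$ then $\mathrm{pm}(B_\psi)=0$ and we are done). Lemma~\ref{lem:Bregman-Minc} then bounds $\mathrm{pm}(B_\psi)\le\prod_{\ell\in L}\bigl(d_{B_\psi}(\ell)!\bigr)^{1/d_{B_\psi}(\ell)}$, and since $t\mapsto(t!)^{1/t}$ is non-decreasing on the positive integers, the Stirling-type estimate \eqref{eq:stirling} makes every factor at most $(1+10^{-9})d/e$. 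Hence $\mathrm{pm}(B_\psi)\le\bigl((1+10^{-9})d/e\bigr)^{|L|}$.

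Combining the two bounds and using $|L|\ge n/10$,
\[
  N(T;G)\;\le\;n\bigl((1+10^{-10})d\bigr)^{n-|L|}\Bigl(\tfrac{(1+10^{-9})d}{e}\Bigr)^{|L|}\;\le\;n\,(1+10^{-9})^{n}\,e^{-n/10}\,d^{n}.
\]
Because $n$ may be assumed large, the prefactor $n(1+10^{-9})^{n}e^{-n/10}$ is at most $e^{-n/1000}$, which yields $N(T;G)\le e^{-n/1000}d^n$ and completes the proof.

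The only genuinely non-routine point is the recognition that re-attaching the leaves is a perfect-matching enumeration and that Br\'egman--Minc is precisely what upgrades the trivial per-leaf factor $d$ to $\approx d/e$: the resulting saving $e^{|L|}\ge e^{n/10}$ swamps both the $(1+10^{-9})^n$ loss from almost-regularity and the polynomial prefactor. The points requiring a little care are checking that $T'$ is a tree with $p(\ell)\in V(T')$ for all $\ell$ (which is where $n$ large is used), that both sides of $B_\psi$ have size exactly $|L|$, and the monotonicity input to \eqref{eq:stirling}; none of these should cause real trouble.
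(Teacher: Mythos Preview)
Your proposal is correct and follows essentially the same strategy as the paper: remove the leaves, bound the embeddings of the core tree $T'$ by the crude greedy estimate $n\,\Delta(G)^{|V(T')|-1}$, and then count the ways to re-attach the leaves as perfect matchings in an auxiliary bipartite graph, to which Br\'egman--Minc (Lemma~\ref{lem:Bregman-Minc}) together with the Stirling estimate \eqref{eq:stirling} gives the crucial saving of $e^{-|L|}$. The only cosmetic difference is that the paper sets up the auxiliary bipartite graph by blowing up each parent vertex according to its number of pendant leaves and applies Br\'egman--Minc on that side, whereas you take the leaf set $L$ itself as one part; these are literally the same bipartite graph up to relabelling, and the degree bound $(1+10^{-10})d$ and the resulting estimate $\mathrm{pm}\le\bigl((1+10^{-9})d/e\bigr)^{|L|}$ are identical. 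Your minor looseness in writing the greedy exponent as $n-|L|$ rather than $n-|L|-1$ is harmless for the final inequality.
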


    \begin{claimproof}[Proof of Claim~\ref{clm:many-leaves}]
        Fix an arbitrary non-leaf vertex $v$ of $T$.
        Consider the directed tree obtained from $T$ by orienting every edge away from $v$; equivalently, consider the out-branching tree of $T$ rooted at $v$. We denote this directed tree by $\overrightarrow{T_v}$. 

        Let $L$ be the set of leaf vertices of $T$ and $R$ be the set of vertices of $T$ that are incident with at least one leaf vertex. We denote by $\overrightarrow{T'_v}$ the directed tree that is obtained from $\overrightarrow{T_v}$ by removing all vertices in $L$.

        Let $\mathrm{Iso}(\overrightarrow{T_v}; D)$ be the set of isomorphisms from $\overrightarrow{T_v}$ to $d$.
        Observe that the number of labeled copies of $\overrightarrow{T_v}$ in $D$ is exactly $N(T; G)$, which is bounded above by the number of isomorphisms from $\overrightarrow{T_v}$ to $D$. Thus, we have
        \begin{equation}\label{eq:bound-by-iso}
            N(T; G) \leq |\mathrm{Iso}(\overrightarrow{T_v}; D)|.
        \end{equation}
        
        Define the set of partial embeddings $\mathrm{Emb}(\overrightarrow{T'_v}, D)$ as below.
        $$
            \mathrm{Emb}(\overrightarrow{T'_v}, D) \defeq \{\phi: \exists \psi \in \mathrm{Iso}(\overrightarrow{T_v}; D) \text{ s.t. } \psi|_{V(\overrightarrow{T'_v})} = \phi\}.
        $$
        For each $\phi\in \mathrm{Emb}(\overrightarrow{T'_v}, D)$, we now consider the set
        $$
            \theta_{\phi} \defeq \{\psi \in \mathrm{Iso}(\overrightarrow{T_v};D): \psi|_{V(\overrightarrow{T'_v})}= \phi\}.
        $$
        From the definition of $\mathrm{Emb}(\overrightarrow{T'_v}, D)$, observe that 
        $$
            \mathrm{Iso}(\overrightarrow{T_v}; D) = \bigcup_{\phi\in \mathrm{Emb}(\overrightarrow{T'_v}; D)} \theta_{\phi}.  
        $$
        Hence, by \eqref{eq:bound-by-iso}, we deduce the following.
        \begin{equation}
            N(T; G) \leq |\mathrm{Emb}(\overrightarrow{T'_v}; D)| \cdot \max\{|\theta_{\phi}| : \phi\in \mathrm{Emb}(\overrightarrow{T'_v}; D)\}.
        \end{equation}
        This implies that it suffices to bound the size of the sets $\mathrm{Emb}(\overrightarrow{T'_v}; D)$ and $\theta_{\phi}$.
        
        We consider a sequence of directed trees $\overrightarrow{T'_1}, \dots, \overrightarrow{T'_i}, \dots, \overrightarrow{T'_m}$, where $\overrightarrow{T'_1}$ consists of the single vertex $v$ and $\overrightarrow{T'_m} = \overrightarrow{T'_v}$. For each $i$, the directed tree $\overrightarrow{T'_{i+1}}$ is obtained from $\overrightarrow{T'_i}$ by choosing a vertex $u_i \in V(\overrightarrow{T'_i})$ for which there exists a vertex $w_i \in V(\overrightarrow{T'_v}) \setminus V(\overrightarrow{T'_i})$ such that $\overrightarrow{u_iw_i}$ is a directed edge of $\overrightarrow{T'_v}$, and then adding the vertex $w_i$ together with the directed edge $\overrightarrow{u_iw_i}$ to $\overrightarrow{T'_i}$. For each $i$ and a given embedding $\phi_i$ of $\overrightarrow{T'_i}$ in $D$, the number of embedding $\phi_{i+1}$ of $\overrightarrow{T'_{i+1}}$ in $D$ that extends $\phi_i$ is at most $d^+_D(\phi_i^{-1}(u_i)) = d_G(\phi_i^{-1}(u_i)) \leq (1 + 10^{-10})d$. Since $\overrightarrow{T'_0}$ is a single vertex, the number of different embeddings of $\overrightarrow{T'_0}$ is $n$. Hence, we have
        \begin{equation}\label{eq:emb-upper}
            |\mathrm{Emb}(\overrightarrow{T'_v}; D)| \leq n \cdot (1 + 10^{-10})^{n - 1 -|L|}d^{n - 1 - |L|}.
        \end{equation}

        Fix an arbitrary $\phi \in \mathrm{Emb}(\overrightarrow{T'_v}; D)$. Let $X \defeq V(D) \setminus \phi(V(\overrightarrow{T'_v}))$ and $Y \defeq \phi(R)$. We now consider a bipartite graph $H$ on the vertex bipartition $X \cup Y$ such that $xy\in X\times Y$ is an edge of $H$ if and only if $xy\in E(G)$. Then we define a bipartite graph $H'$ by blowing up each vertex $y\in Y$ by $L_y$ times, where $L_y$ is the number of leaves in $T$ that are incident with the vertex $\phi^{-1}(y)$. Denote $Y'$ as the vertex set obtained from blowing up each vertex $y\in Y$ by $L_y$ times. Then $H'$ is a bipartite graph on the vertex partition $X \cup Y'$ with $|X| = |Y'| = |L| \geq \frac{n}{10}$ since $|Y'| = \sum_{y\in Y} L_y$ and $\sum_{y\in Y} L_y$ is exactly the same number of leaves of $T$, which is $|L|$. 
        As $G$ is $(1 \pm 10^{-1})d$-regular, each $y'\in Y'$ has degree at most $(1 + 10^{-10})d$. Observe that each isomorphism from $\overrightarrow{T'_v}$ to $D$ that extends $\phi$ corresponds to each perfect matching in $H'$ since $L$ is the set of leaves of $T$. Thus, by Lemma~\ref{lem:Bregman-Minc} and our choice of $d_0$, we have
        \begin{equation}\label{eq:theta-phi}
            |\theta_{\phi}| = pm(H') \leq \left(((1 + 10^{-10})d)!^{1/((1 + 10^{-10})d)}\right)^{|L|} \leq (1 + 10^{-9})^{|L|}e^{-|L|}d^{|L|}.
        \end{equation}

        Hence, from \eqref{eq:bound-by-iso}, \eqref{eq:emb-upper}, and \eqref{eq:theta-phi}, we obtain the following.
        \begin{equation*}
            N(T; G) \leq (1 + 10^{-9})^{n - 1}e^{-|L|}d^{n - 1}.
        \end{equation*}
        As $|L| \geq \frac{n}{10}$, we finally deduce
        \begin{equation*}
            N(T; G) \leq e^{-n/1000}d^n.
        \end{equation*}
        This completes the proof.
    \end{claimproof}
    
    As a direct consequence of Claims~\ref{clm:few-leaves} and \ref{clm:many-leaves}, we obtain the desired inequality for any $n$-vertex tree $T$. This completes the proof.
\end{proof}


\section{Concluding remarks}\label{sec:concluding}

In this paper, we studied an anticoncentration phenomenon of random spanning trees in almost regular graphs, and as a corollary, we deduce that a connected almost regular graph with sufficiently high degree contains exponentially many pairwise non-isomorphic spanning trees. Around this theme, we have many open problems and interesting directions for further research.


\textbf{$\bullet$ Threshold for $d_0$.}
We wonder whether Theorem~\ref{thm:main-non-isomorphic} still holds for small degree cases. We believe one can take $d_0 = 3$.
\begin{conjecture}\label{conj:start-cubic}
    Let $n, d \geq 3$ be integers. Then there exists a universal constant $c > 0$ that only depends on $d$ such that the following holds for every connected $n$-vertex $d$-regular graph $G$.
    $$
        |T_{\mathrm{unlabeled}}(G)| \geq e^{cn}.
    $$
\end{conjecture}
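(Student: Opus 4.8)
The natural route is to prove the anticoncentration statement directly, i.e.\ to establish the analogue of Theorem~\ref{thm:counting} in the small-degree regime: for every connected $n$-vertex $d$-regular graph $G$ ($d\ge 3$) and every $n$-vertex tree $T$,
\[
  N(T;G)\le e^{-c_d n}\,|T(G)|,\qquad\text{equivalently}\qquad \Pr[\calT\simeq_{\mathrm{iso}}T]\le e^{-c_d n},
\]
for some constant $c_d>0$ depending only on $d$. Conjecture~\ref{conj:start-cubic} then follows exactly as Theorem~\ref{thm:main-non-isomorphic} followed from Theorems~\ref{thm:counting} and~\ref{thm:Kostochka}, using any exponential lower bound $|T(G)|\ge\beta_d^{\,n}$ with $\beta_d>1$; such bounds are available for all $d\ge 3$ from Theorem~\ref{thm:Kostochka} (and, for $d=3$, sharper results on the minimum number of spanning trees of connected cubic graphs). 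So the entire difficulty is the popularity bound.

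I would re-run the two-case split of the proof of Theorem~\ref{thm:counting}, but sharpen both halves to target $|T(G)|$ rather than $d^n$. In the few-leaves case, orient $T$ so that its leaves become sources and bound $N(T;G)$ by the number of out-degree-$1$ spanning subdigraphs of the digraph $D$ obtained from $G$ by replacing every edge with a digon whose unique directed cycle is a digon and which have at most $|L|$ in-degree-$0$ vertices. The key observation, which the large-$d$ argument does not exploit, is that such subdigraphs are in bijection with pairs (spanning tree, marked edge) --- given a spanning tree and one of its edges, orient everything towards the two endpoints of that edge, as in the definition of $\overrightarrow{T_{ab}}$ --- so their number is exactly $(n-1)|T(G)|$ rather than $d^n$; hence $N(T;G)\le (n-1)\,|T(G)|\cdot\Pr[\calT\text{ has at most }|L|+2\text{ leaves}]$. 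It therefore suffices to know that the number of leaves of the uniform spanning tree of a connected $d$-regular graph is linear in $n$ and concentrated with an exponential rate --- a statement that should follow from the negative-correlation (transfer-current) structure of the uniform spanning tree together with a martingale argument. In the many-leaves case, strip the $|L|$ leaves, bound the number of embeddings of the leaf-deleted tree crudely by $n\,d^{\,n-1-|L|}$, and bound the number of ways to reattach the leaves by a permanent via Lemma~\ref{lem:Bregman-Minc}, which contributes $\big((1+o(1))d/e\big)^{|L|}$; this beats $\beta_d^{\,n}$ as soon as $|L|\ge (\ln(d/\beta_d)+\epsilon)n$. For every $d$ above a moderate, explicitly computable threshold these two ranges overlap, because $\ln(d/\beta_d)=O(\log^2 d/d)\to 0$ by Theorem~\ref{thm:Kostochka} while the typical leaf fraction of the uniform spanning tree stays bounded below, so only careful bookkeeping is needed there.

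The genuine obstacle is the truly small-degree regime --- $d\in\{3,4,5\}$, and more generally any $d$ for which $\beta_d$ is not close to $d$. Here a spanning tree can have up to roughly $\tfrac{d-2}{d-1}n$ leaves, while both the typical leaf fraction of the uniform spanning tree and the exponent $\ln(d/\beta_d)$ governing the Br\'{e}gman step are bounded away from that maximum: there is a non-empty window of \emph{medium-leaf} trees $T$ for which the few-leaves bound is no longer a large-deviation estimate and the Br\'{e}gman gain $(d/e)^{|L|}$ is too small to compensate for the factor $d^n/\beta_d^{\,n}$. Ruling out such $T$ is precisely the point where, as noted in the introduction, one must control automorphism groups of trees with a linear number of leaves, and the balls--into--bins machinery of this paper does not by itself seem to reach it.

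To attack the medium-leaf window I would look for a finer recursive decomposition that trades leaves for structure that can be counted against $|T(G)|$: repeatedly peel pendant subtrees so that each peeling either produces a Br\'{e}gman-type $1/e$ factor at a high-degree attachment vertex, or exposes a long bare path, each such path being an embedded self-avoiding walk whose count can be folded into $|T(G)|$ through the transfer-matrix / effective-resistance description of the uniform spanning tree. A more robust alternative is a local-switching argument: show that any copy of a medium-leaf tree $T$ in $G$ contains $\Omega(n)$ vertex-disjoint reroutable gadgets, each of which can be re-wired to yield a spanning tree not isomorphic to $T$, so that copies of $T$ occupy only a $2^{-\Omega(n)}$ fraction of a suitable container family of spanning trees. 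In either case, a complete proof of Conjecture~\ref{conj:start-cubic} appears to require a genuinely new handle on the distribution of unlabeled shapes of random spanning trees in sparse regular graphs.
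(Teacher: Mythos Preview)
This statement is Conjecture~\ref{conj:start-cubic}, presented in the concluding remarks as an open problem; the paper does \emph{not} prove it, so there is no paper proof to compare your proposal against. Your submission is not a proof either, and you acknowledge this: the closing paragraph concedes that the medium-leaf window for small $d$ is unhandled and ``appears to require a genuinely new handle.'' What you have written is a research plan, not a proof.

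On the substance of that plan: your refinement of the few-leaves case is a genuine sharpening of the paper's argument. Observing that out-degree-$1$ spanning subdigraphs of $D$ whose unique cycle is a digon biject with pairs (spanning tree of $G$, marked edge) replaces the crude count $\prod_v d_G(v)\approx d^n$ by $(n-1)\,|T(G)|$, so the few-leaves bound becomes a large-deviation estimate for the leaf count of the uniform spanning tree rather than a balls--into--bins estimate; this is exactly what one would want for small $d$, and concentration of the UST leaf count is indeed known. The many-leaves case you describe is the paper's Claim~\ref{clm:many-leaves} verbatim. The obstruction you isolate --- that for $d\in\{3,4,5\}$ the Br\'{e}gman saving $e^{-|L|}$ cannot close the gap between $d^{n}$ and $|T(G)|$ over the whole range of $|L|$ above the typical UST leaf fraction --- is precisely why the paper states this as a conjecture rather than a theorem. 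Your proposed attacks on the medium-leaf window (recursive peeling of pendant subtrees, local switching against a container family) are reasonable directions, but as written they are heuristics, not arguments; in particular, the claim that one can always find $\Omega(n)$ vertex-disjoint ``reroutable gadgets'' in any embedded copy of a medium-leaf tree in an arbitrary $3$-regular graph is exactly the missing lemma, and nothing in your outline indicates how to prove it.
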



\textbf{$\bullet$ Approaches to Otter's formula.}
Let $\alpha$ be the constant that appears in Otter's formula (Theorem~\ref{thm:otter}). Our main theorem, Theorem~\ref{thm:main-non-isomorphic}, states that one can find exponentially many non-isomorphic spanning trees in a connected almost regular graph, but the constant of the base is far from $\alpha$. It would be interesting to show that the base approaches $\alpha$ as $d$ goes to infinity. 

\begin{question}\label{ques:approach-to-alpha}
    For a given real number $\ve > 0$, is there a constant $d_{\ve} > 0$ such that the following holds for all $d \geq d_{\ve}$?
    For every connected $n$-vertex $d$-regular graph $G$, 
    $$
        |T_{\mathrm{unlabeled}}(G)| \geq (\alpha - \ve)^n.
    $$
\end{question}
By considering the random graph with not too small edge density, we believe that the optimal anticoncentration inequality would be in the form below.
$$
    \Pr\bigl[\mathcal{T} \simeq_{\mathrm{iso}} T\bigr] \leq e^{-(1-o(1))n}.
$$
Since $e < \alpha$, this means that to answer Question~\ref{ques:approach-to-alpha} affirmatively, it requires a different approach than what we used in this paper.


\textbf{$\bullet$ Optimal anticoncentration inequality.}
As we already referred to in the previous paragraph, we expect that the optimal anticoncentration inequality would be attained from the random graph. Let $n$ be a sufficiently large number and $p > 0$ be a real number such that $p \gg \log n/ n$. For a given $n$-vertex tree $T$, the number of labelled copies of $T$ in $K_n$ is $\frac{n!}{|\mathrm{Aut}(T)|} = \left( \frac{(1 + o(1))n}{e} \right)^n \cdot \frac{1}{|\mathrm{Aut}(T)|}$, where $\mathrm{Aut}(T)$ is the automorphism group of $T$. Hence, the expected number of labeled copies of $T$ in $G_{n, p}$ is $\left( \frac{(1 + o(1))np}{e} \right)^n \cdot \frac{1}{|\mathrm{Aut}(T)|}$, and $G_{n, p}$ is almost regular graph with degrees close to $np$ with high probability. Hence, it is natural to conjecture the following.

\begin{conjecture}\label{conj:optimal-anticoncentration}
    For a given real number $\ve > 0$, there exists a constant $d_{\ve} > 0$ such that the following holds for all integers $d \geq d_{\ve}$. Let $G$ be a connected $n$-vertex $d$-regular graph and $\calT$ be a uniformly random spanning tree of $G$. Then for all $n$-vertex tree $T$, we have
    $$
        \Pr\bigl[\calT \simeq_{\mathrm{iso}} T\bigr] \leq e^{-(1-\ve)n}.
    $$
\end{conjecture}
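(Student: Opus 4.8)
The plan is to attack the conjecture by reducing it, exactly as in the proof of Theorem~\ref{thm:main-anticoncentration}, to a counting bound. Theorem~\ref{thm:Kostochka} gives $|T(G)|\ge d^n e^{-o_d(1)n}$ for connected $(1\pm o(1))d$-regular $G$, and $|\mathrm{Aut}(T)|\ge 1$ trivially, so it suffices to prove $N(T;G)\le e^{-(1-\ve/2)n}d^n$ for every $n$-vertex tree $T$, every such $G$, and all $d\ge d_\ve$. This target is essentially tight: for $G=G_{n,d/n}$ one has $\mathbb{E}[N(T;G)]=\tfrac{n!}{|\mathrm{Aut}(T)|}(d/n)^{n-1}=e^{-(1+o(1))n}d^n/|\mathrm{Aut}(T)|$, so no exponent better than $e^{-n}$ is possible. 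Thus the task is to upgrade the exponent $\tfrac{1}{1000}$ of Theorem~\ref{thm:counting} to $1-o(1)$.

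The gain over the trivial bound $N(T;G)\le n\,d^{n-1}$ must come from injectivity, which should let one charge each vertex of $T$ only about $d/e=(1+o_d(1))(d!)^{1/d}$ instead of $d$. There are two regimes where this is transparent. If $T$ has $(1-o(1))n$ leaves, Claim~\ref{clm:many-leaves} already delivers it: embedding $T$ minus its leaf set $L$ costs $\le n(1+o(1))^n d^{\,n-|L|-1}$, and attaching $L$ is a perfect matching of maximum degree $\le d$, which by the Br\'{e}gman--Minc inequality costs $\le (d/e)^{(1+o_d(1))|L|}$; the product is $e^{-(1-o(1))n}d^n$. If instead $T$ has $o(n)$ leaves, then $T$ is a subdivision of a tree on $o(n)$ vertices and $(1-o(1))n$ of its vertices have degree $2$; after embedding the $o(n)$ branch vertices, filling in the subdividing paths is a count of spanning linear forests of $G$ with prescribed components, which a permanent analogue of the Br\'{e}gman--Minc inequality — the row-sum form $\mathrm{per}(A)\le\prod_i(r_i!)^{1/r_i}$, so $\mathrm{per}(A_G)\le\prod_v(d_G(v)!)^{1/d_G(v)}$ — bounds by $(d/e)^{(1-o(1))n}$.

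The main obstacle is the intermediate regime, where $T$ has $\Theta(n)$ leaves \emph{and} $\Theta(n)$ branch vertices, the prototype being the caterpillar with spine $v_1,\dots,v_{n/2}$ and one pendant leaf at every $v_i$. Here the crude bound (number of spine embeddings)$\times$(maximum number of leaf matchings) costs $n\,d^{n/2}(d/e)^{n/2}=n\,e^{-n/2}d^n$, off by a factor $e^{n/2}$, because the \emph{same} vertices of $G$ serve both as spine images and as leaf images. What must rescue the argument is a trade-off: a vertex set admitting many spine embeddings necessarily spans many internal edges, which lowers the degrees — hence the matching counts — in the associated leaf-matching bipartite graphs, whereas a nearly independent vertex set admits only $O(1)$ spine embeddings. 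Making this quantitative and sharp is the crux. I expect an entropy argument in the spirit of Radhakrishnan's proof of Br\'{e}gman--Minc: take a uniformly random embedding $\phi$ of $T$ in $G$, so $\log N(T;G)=H(\phi)$, reveal the coordinates $\phi(v)$ layer by layer along the leaf-stripping filtration of $T$ (center first, then successive layers of stripped leaves), in a fixed order between layers but a random order within each layer, and show the conditional entropy of each $\phi(v)$ given the past is at most $\log d-1+o(1)$ on average. In the last layer this is exactly Br\'{e}gman--Minc; the interior layers are where the trade-off has to be cashed in — showing that the neighbours of an already-placed vertex that have been ``used up'' genuinely shave off the $e^{-1}$ factor — and this is the step I expect to be hard. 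Keeping the between-layer order fixed is essential: a fully random reveal order would cost $\log n$ for every vertex revealed before all of its $T$-neighbours.

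Finally, $d\ge d_\ve$ is used to absorb every $o_d(1)$ — from Theorem~\ref{thm:Kostochka}, from $(d!)^{1/d}=(1+o_d(1))d/e$, and from the $(1\pm 10^{-10})$ slack — into $\ve$, and to dispatch degenerate small-degree cases (for instance $N(T;G)=0$ whenever $T$ has a vertex of degree exceeding $d$). A clean write-up would keep the three-way case split, invoking Claim~\ref{clm:many-leaves} verbatim in the leaf-rich case, the permanent bound in the leaf-poor case, and the entropy argument only in the intermediate regime.
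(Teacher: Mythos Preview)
The statement you are attempting to prove is labelled \textbf{Conjecture}~\ref{conj:optimal-anticoncentration} in the paper, and the paper does \emph{not} prove it: it appears in the concluding remarks as an open problem, with the only supporting evidence being the remark that the Br\'{e}gman--Minc inequality handles the special case where $T$ is a path. There is therefore no ``paper's own proof'' against which to compare your proposal.

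Your proposal is, correspondingly, not a proof either. You correctly reduce the conjecture to the counting statement $N(T;G)\le e^{-(1-\ve/2)n}d^n$, you dispatch the leaf-rich regime by invoking Claim~\ref{clm:many-leaves} with $|L|=(1-o(1))n$, and you sketch a permanent-based argument for the leaf-poor regime --- but you explicitly identify the intermediate regime (e.g.\ the caterpillar with $n/2$ leaves) as the obstruction and leave it open, saying only that you ``expect an entropy argument in the spirit of Radhakrishnan'' and that the key step is ``the step I expect to be hard''. That is an honest assessment of where the difficulty lies, and it matches the paper's own position that this is a genuine open problem; but it is a research programme, not a proof.

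Two smaller points on the parts you do sketch. First, in the leaf-poor case your appeal to $\mathrm{per}(A_G)\le\prod_v(d_G(v)!)^{1/d_G(v)}$ counts $1$-regular out-orientations (equivalently, vertex-disjoint cycle covers), not spanning linear forests with prescribed endpoints; turning this into a bound on embeddings of long paths with fixed ends requires an additional argument, although one along these lines is plausible and is presumably what the paper has in mind when it says the path case follows from Br\'{e}gman--Minc. Second, even granting all the pieces, your case split only covers $|L|=o(n)$ and $|L|=(1-o(1))n$; any fixed $|L|=\Theta(n)$ with $0<\liminf |L|/n\le\limsup |L|/n<1$ falls into the intermediate regime you leave open, so that regime is not a thin boundary but the entire bulk of the problem.
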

As supporting information, we note that the Br\'{e}gman--Minc inequality implies that $n$-vertex path satisfies Conjecture~\ref{conj:optimal-anticoncentration}.
We also remark that the bound in Conjecture~\ref{conj:optimal-anticoncentration} is best possible up to lower-order terms. Indeed, one can easily construct a $d$-regular graph that contains $(d/e)^{(1-o_d(1))n}$ distinct $n$-vertex paths, therefore, we omit the details. It would also be very interesting to determine whether Conjecture~\ref{conj:optimal-anticoncentration} holds for an $(n, d, \lambda)$-graph with appropriate parameters $d$ and $\lambda$.


\textbf{$\bullet$ Graphs with minimum degree condition.}
We now consider replacing the almost regularity assumption with a minimum degree condition. As noted in Section~\ref{sec:intro}, Theorems~\ref{thm:main-non-isomorphic} and~\ref{thm:main-anticoncentration} no longer hold under the weaker assumption that the minimum degree is at least \(d\), with \(K_{d,n-d}\) serving as a counterexample. However, even the graph $K_{d, n-d}$ contains at least polynomially many pairwise non-isomorphic spanning trees. Thus, we conjecture that a polynomial anticoncentration phenomenon still exists even under the minimum degree assumption, which is essentially the weakest assumption.

\begin{conjecture}\label{conj:min-degree-polynomial}
    There exist universal constants $d_0, c > 0$ such that the following holds for all integers $d \geq d_0$. Let $G$ be a connected $n$-vertex graph with minimum degree at least $d$ and $\calT$ be a uniformly random spanning tree of $G$. Then for all $n$-vertex tree $T$, we have
    $$
        \Pr\bigl[\calT \simeq_{\mathrm{iso}} T\bigr] \leq n^{-c}.
    $$
\end{conjecture}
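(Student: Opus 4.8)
The plan is to mimic the reduction behind Theorem~\ref{thm:main-anticoncentration}, but now keeping track of automorphisms. Writing $N_{\mathrm{iso}}(T;G)$ for the number of spanning subgraphs of $G$ isomorphic to $T$ (so $N_{\mathrm{iso}}(T;G)=N(T;G)/|\mathrm{Aut}(T)|$ and $\Pr[\calT\simeq_{\mathrm{iso}}T]=N_{\mathrm{iso}}(T;G)/|T(G)|$), it suffices to show $N_{\mathrm{iso}}(T;G)\le n^{-c'}\,|T(G)|$ for some $c'>c$ and every $n$-vertex tree $T$. For the denominator, Theorem~\ref{thm:Kostochka} gives $|T(G)|\ge d_g(G)^n e^{-O(n\log^2 d/d)}$, where $d_g(G)\ge d$ is the geometric mean degree; since $d\ge d_0$ is large, this error factor is a controlled loss, so the real target is $N_{\mathrm{iso}}(T;G)\le n^{-c'}d_g(G)^n e^{O(n\log^2 d/d)}$. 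Note that the automorphism factor is essential here: already for $G=K_{d,n-d}$ one has $N(T;G)$ exponentially larger than $|T(G)|$, and only the division by $|\mathrm{Aut}(T)|$ (astronomically large for the trees that actually occur) brings the ratio back down to polynomial.

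Next I would run a case split on the structure of $T$, in the spirit of the proof of Theorem~\ref{thm:counting}. If $T$ has at most $(1-\eta)n$ leaves for a small absolute constant $\eta$, then its ``core'' (delete all leaves) is linear in size, and a build-up / Br\'egman--Minc estimate for $N(T;G)$ in which each extension step is weighted by the actual degree of the relevant vertex of $G$ --- the argument of Claims~\ref{clm:few-leaves} and~\ref{clm:many-leaves}, but with $d_g(G)^n$ replacing $d^n$ --- should give $N_{\mathrm{iso}}(T;G)\le e^{-cn}d_g(G)^n$, far stronger than needed. So the crux is the regime where $T$ has $(1-o(1))n$ leaves sitting on a small hub set $R$. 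There, an iso copy of $T$ in $G$ is determined by (i) an embedding of the core, equivalently a choice of hub-images $W\subseteq V(G)$ together with the tree structure among them, and (ii) an assignment of the $|L|$ remaining vertices of $G$ to the hubs, with exactly $L_w$ vertices assigned to each $w\in W$, each adjacent to $w$; step (ii) is a weighted perfect-matching count controlled by Lemma~\ref{lem:Bregman-Minc}, and the number of choices in (i) is comparatively tiny (polynomial in $n$ if $|R|=O(1)$). Thus almost all of $N_{\mathrm{iso}}(T;G)$ comes from step (ii).

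To finish in the hub regime, note that for any isomorphism invariant $\Phi$ we have $\Pr[\calT\simeq_{\mathrm{iso}}T]\le\Pr[\Phi(\calT)=\Phi(T)]$; I would take $\Phi(\calT)$ to be the sorted degree sequence of $\calT$. If $T$ has $(1-o(1))n$ leaves on a small hub set, then any tree isomorphic to $T$ realises the same multiset of hub-degrees, and in $G$ the only possible hubs are the $O(1)$--many vertices of degree $\gg d$ (as $K_{d,n-d}$ shows, there can be as few as $d$ of them). The number of vertices of $\calT$ that are leaves attached to a fixed such vertex $w$ is then a balls--into--bins--type statistic with $\Theta(n)$ balls and a bounded number of bins --- a finite-bin analogue of Theorem~\ref{thm:balls-into-bins-regular}, where concentration is only at scale $\sqrt n$ rather than exponential --- so one expects it to have fluctuation $\Theta(\sqrt n)$ and no atom larger than $O(n^{-1/2})$, yielding $\Pr[\calT\simeq_{\mathrm{iso}}T]=O(n^{-1/2})$; heuristically this matches $K_{d,n-d}$, where the hub-degree vector of a uniform spanning tree is essentially multinomial on $d$ cells. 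Making this precise requires understanding the joint law of the edges of $\calT$ incident to $w$; I would bring in the transfer-current (determinantal) description of the uniform spanning tree, use negative association to obtain a second moment of order $n$, and then an Esseen-type / local-limit argument to turn the variance lower bound into the atom bound $n^{-\Omega(1)}$.

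I expect two obstacles, in increasing order of difficulty. The first is the dichotomy itself: rigorously showing that whenever $N_{\mathrm{iso}}(T;G)$ is within a polynomial factor of $|T(G)|$, the tree $T$ must have almost all vertices as leaves on a small hub set \emph{and} $G$ must have only a bounded number of vertices of degree $\gg d$ --- this is where one must pin down the multiplicative contribution of the non-leaf part of $T$ sharply (a degree-weighted Br\'egman--Minc estimate combined with Theorem~\ref{thm:Kostochka}) so as to exclude all intermediate configurations. The second, and I believe the real content of the conjecture, is the anticoncentration input for uniform spanning trees: for \emph{every} connected $n$-vertex graph with minimum degree $\ge d$, some explicit isomorphism invariant of $\calT$ should have no atom larger than $n^{-\Omega(1)}$. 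A variance lower bound of order $n$ should come from a second-moment computation exploiting the negative correlation of spanning-tree edges, but the local-limit step --- keeping the characteristic function of the chosen invariant bounded away from $1$ over a range of length $\gg 1$, uniformly over all such $G$ --- is delicate and is where the main work lies. The $K_{d,n-d}$ example indicates $n^{-1/2}$ is about the truth, so the constant $c$ obtained this way need not be optimal, but it suffices.
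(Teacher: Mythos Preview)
The statement you are attempting is Conjecture~\ref{conj:min-degree-polynomial}; the paper does \emph{not} prove it. It is listed as an open problem in the concluding remarks, motivated by the observation that $K_{d,n-d}$ has only polynomially many unlabeled spanning trees. There is therefore no proof in the paper to compare your attempt against.

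Your write-up is accordingly not a proof but a plan, and you yourself flag the two main gaps. Beyond those, two further issues deserve mention. First, there is a sign slip in your reduction via Theorem~\ref{thm:Kostochka}: from $|T(G)|\ge d_g(G)^n e^{-O(n\log^2 d/d)}$, the sufficient target for the numerator is $N_{\mathrm{iso}}(T;G)\le n^{-c'}d_g(G)^n e^{-O(n\log^2 d/d)}$, with the exponential factor working \emph{against} you, not for you. For fixed $d\ge d_0$ this is an exponential-in-$n$ handicap that a $n^{-1/2}$ saving in the hub regime cannot absorb; in that regime you must compare $N_{\mathrm{iso}}(T;G)$ and $|T(G)|$ directly rather than through the Kostochka lower bound. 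Second, the assertion that ``the argument of Claims~\ref{clm:few-leaves} and~\ref{clm:many-leaves}, but with $d_g(G)^n$ replacing $d^n$'' still yields an exponential gain is unsupported: Claim~\ref{clm:few-leaves} goes through Theorem~\ref{thm:balls-into-bins-regular}, which requires $H$ to be almost regular, and Claim~\ref{clm:many-leaves} uses a uniform degree upper bound in the Br\'egman--Minc step. Neither ingredient survives the passage to an arbitrary minimum-degree-$d$ graph.

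The two obstacles you isolate --- a sharp structural dichotomy for trees $T$ and host graphs $G$ with $N_{\mathrm{iso}}(T;G)$ close to $|T(G)|$, and a pointwise anticoncentration statement for an isomorphism invariant of the uniform spanning tree valid in \emph{every} such $G$ --- are precisely the content of the conjecture. Your outline is a reasonable map of where the difficulty lies, but it remains a programme rather than a proof.
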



\textbf{$\bullet$ Other graph families.} Let $\calF$ be a collection of unlabelled graphs and $G$ be a host graph. Denote $\calF(G)$ as the set of labeled copies of members in $\calF$ that appear as subgraphs of $G$. We now consider the quantity
$$
    q(\calF; G) \defeq \max\{\Pr[\calF' \simeq_{\mathrm{iso}} F]: F\in \calF\},
$$
where $\calF'$ is a graph chosen uniformly at random from $\calF(G)$. With this notation, Theorem~\ref{thm:main-anticoncentration} can then be rephrased as asserting that $q(\calF; G)$ is exponentially small if $\calF$ is the set of all $n$-vertex trees and $G$ is a connected $n$-vertex almost regular graph. Then the natural question is to investigate the quantity $q(\calF; G)$ for other graph families $\calF$ and graphs $G$, for instance, one can choose $\calF$ as an $n$-vertex $2k$-regular graph and $G$ as an $n$-vertex $2\ell$-regular graph, where $1 \leq k < \ell$. We also believe that considering hypergraphs and digraphs would be very interesting.


\textbf{$\bullet$ Further research on $(X, Y; H)$-model.} Finally, we comment on the $(X, Y; H)$-model introduced in Section~\ref{sec:balls-into-bins}. Since applications of the classical balls--into--bins model have been extensively studied, we expect that its graph-theoretic generalization, the $(X, Y; H)$-model, will admit a variety of further applications. Exploring this model for various graphs $H$ would be an interesting direction for future work.

\section*{Acknowledgement}
The author thanks Sang-il Oum for helpful comments on the introduction and references.


\vspace{-0.2cm}

\providecommand{\MR}[1]{}
\providecommand{\MRhref}[2]{%
  \href{http://www.ams.org/mathscinet-getitem?mr=#1}{#2}
}

    \bibliographystyle{amsplain_initials_nobysame}
    \bibliography{bibfile}


\end{document}